\numberwithin{equation}{section}
\pgfplotsset{compat=newest}
\titleformat{\section}[block]{\large\sc\filcenter}{\thesection.}{0.5ex}{}[]
\titleformat{\subsection}[runin]{\bf}{\thesubsection.}{0.5ex}{}[.]
\newtheorem{lemma}{Lemma}[section]
\newaliascnt{proposition}{lemma}
\newaliascnt{corollary}{lemma}
\newtheorem{corollary}[corollary]{Corollary}
\newaliascnt{theorem}{lemma}
\newtheorem{theorem}[theorem]{Theorem}
\newaliascnt{definition}{lemma}
\newtheorem{definition}[definition]{Definition}
\newaliascnt{assumption}{lemma}
\newaliascnt{example}{lemma}
\newtheorem{example}[example]{Example}
\newaliascnt{notation}{lemma}
\newtheorem{notation}[notation]{Notation}
\newaliascnt{convention}{lemma}
\newaliascnt{remark}{lemma}
\newtheorem{remark}[remark]{Remark}
\theoremstyle{nonumberplain}
\newtheorem{proof}{Proof}
\newcommand{\N}{\mathds{N}}
\newcommand{\R}{\mathds{R}}
\newcommand{\dom}[1]{\mathcal{D}\left(#1\right)} 
\let\RE\Re
\let\Re=\undefined
\DeclareMathOperator{\Re}{\RE e}
\let\IM\Im
\let\Im=\undefined
\DeclareMathOperator{\Im}{\IM m}
\newcommand{\abs}[1]{\left|#1\right|}
\newcommand{\norm}[1]{\left\|#1\right\|}
\newcommand{\norms}[1]{\|#1\|}
\newcommand{\set}[1]{\left\{#1\right\}}
\newcommand{\inner}[2]{\left<#1,#2\right>}
\let\ii\i
\renewcommand{\i}{\mathrm i}
\renewcommand{\d}{\,\mathrm d}
\newcommand{\ve}{\varepsilon}
\newcommand{\vw}{\vec{w}}
\newcommand{\bs}{\vec{s}}
\newcommand{\bt}{\vec{t}}
\newcommand{\bw}{{\vec{w}}}
\newcommand{\X}{{\tt X}}
\newcommand{\Y}{{\tt Y}}
\newcommand{\V}{{\tt V}}
\newcommand{\y}{{\tt y}}
\newcommand{\yd}{\y^\delta}
\newcommand{\m}{{\tt m}}
\newcommand{\n}{{\tt n}}
\newcommand{\x}{{\tt x}}
\newcommand{\xad}{\x^{\alpha\delta\eta}}
\newcommand{\xdad}{\xad_{\m\n}}
\newcommand{\xdag}{\x^\dagger}
\newcommand{\z}{{\tt z}}
\newcommand{\op}[1]{{F[#1]}}
\newcommand{\ulx}{\underline{\x}}
\newcommand{\uly}{\underline{\y}}
\newcommand{\no}{{\tt N}}
\newcommand{\Tik}{\mathbf{T}}
\newcommand{\Reg}{\mathbf{R}}
\renewcommand{\arraystretch}{1.5}
\title{Neural operators for solving nonlinear inverse problems}
\author{Otmar Scherzer$^{1,2,3}$\\{\footnotesize\href{mailto:otmar.scherzer@univie.ac.at}{otmar.scherzer@univie.ac.at}}
\and Thi Lan Nhi Vu$^{1,3}$\\
{\footnotesize\href{mailto:thi.lan.nhi.vu@univie.ac.at}{thi.lan.nhi.vu@univie.ac.at}}
\and Jikai Yan$^{1}$ \\{\footnotesize\href{mailto:jikai.yan@univie.ac.at}{jikai.yan@univie.ac.at}}
}
\date{}
\begin{document}

\maketitle
\thispagestyle{empty}
\begin{center}
\parbox[t]{17em}{\footnotesize
\hspace*{-1ex}$^1$Faculty of Mathematics\\
University of Vienna\\
Oskar-Morgenstern-Platz 1\\
A-1090 Vienna, Austria}
\hfil
\parbox[t]{17em}{\footnotesize
\hspace*{-1ex}$^2$Johann Radon Institute for Computational\\
\hspace*{1em}and Applied Mathematics (RICAM)\\
Altenbergerstraße 69\\
A-4040 Linz, Austria}
\end{center}

\begin{center}
\parbox[t]{17em}{\footnotesize
\hspace*{-1ex}$^3$Christian Doppler Laboratory\\
\hspace*{1em}for Mathematical Modeling and Simulation\\
\hspace*{1em}of Next Generations of Ultrasound Devices\\
\hspace*{1em}(MaMSi)\\
Oskar-Morgenstern-Platz 1\\
A-1090 Vienna, Austria}
\end{center}

\setcounter{section}{0}

\begin{abstract}		
	We consider solving a probably infinite dimensional operator equation, where the operator is not modeled by physical laws but is specified indirectly via training pairs of the input-output relation of the operator. 
	Neural operators have proven to be efficient to approximate infinite dimensional operators. 
	In this paper we analyze Tikhonov regularization with neural operators as surrogates for solving ill-posed operator equations. The analysis is based on balancing approximation errors of neural operators, regularization parameters, and noise. Moreover, we extend the approximation properties of neural operators from sets of continuous functions to Sobolev and Lebesgue spaces, which is crucial for solving inverse problems and we discuss the problem of finding an appropriate network structure of neural operators (training). Finally, we present some numerical experiments.  
\end{abstract}

\section{Introduction}
In this paper, we consider \emph{nonlinear inverse problems}, which are formulated as finding the solution of a \emph{nonlinear} operator equation
\begin{equation} \label{eq:op}
	F[\x]=\y\;,
\end{equation}
where the operator $F: \mathcal{D}(F) \subseteq \X \to \Y$ is defined on its domain $\mathcal{D}(F) \neq \emptyset$. For technical reasons, we need to 
	consider the operator $F$ in parallel in different function space settings. With the notation, we mean that we consider mapping properties of $F$ (such as continuity) with respect to the topologies of $\X$ and $\Y$.

Inverse problems are often ill-posed, meaning that their solutions do not depend continuously on the data $\y$. This means that even if $\y^\delta \to \y$, there is no guarantee that a solution $\x^\delta$ of \autoref{eq:op} with data $\y^\delta$ (presumably it exists) will converge to a solution of \autoref{eq:op}. In this paper, the superscript $\delta$ means that we have some estimate on the amount of noise,
\begin{equation}\label{eq:magnitude}
	\norms{\y-\yd}_\Y \leq \delta\,.
\end{equation}
Therefore, a regularization technique needs to be applied to solve \autoref{eq:op} in a stable manner. One of the most widely studied regularization method is Tikhonov regularization, which approximates a solution of \autoref{eq:op} by a minimizer $\x^{\alpha\delta}$ of the quadratic Tikhonov functional 
\begin{equation} \label{eq:tik}
\Tik^{\alpha\delta}[\x]:=\norms{F[\x]-\yd}_\Y^2 + \alpha \norms{\x-\x^{(0)}}_\X^2 \quad \text{over} \quad  \x \in \mathcal{D}(F), 
\end{equation}
with $\alpha > 0$ for some fixed point $\x^{(0)} \in \mathcal{D}(F)$. It is common to put $\x^{(0)} = 0$ when dealing with linear operator, and we do the same here. Tikhonov regularization has been generalized to convex regularization, which consists in minimization of 
\begin{equation} \label{eq:tikc}
	\Tik_\mathcal{R}^{\alpha\delta}[\x]:=\norms{F[\x]-\yd}_\Y^2 + \alpha \Reg[\x] \quad \text{over} \quad  \x \in \mathcal{D}(F), 
\end{equation}
where $\Reg: \X \to [0,\infty]$ is a functional, typically convex and proper. The most important generalized variational regularization methods are \emph{total variation minimization} (see \cite{RudOshFat92}) and \emph{sparsity regularization} (see \cite{DauDefMol04}). A general convex regularization functional is often associated with a formulation in a Banach space (see for instance \cite{SchGraGroHalLen09,SchuKalHofKaz12}). We do not consider these generalizations, because the considered simple examples have been evaluated in more depth in the quadratic setting and for Hilbert spaces. Generalizations can of course be done but lead to technical complications (see \cite{PoeResSch10}).  

For computational purposes, the Tikhonov functional is discretized in the following way:
\begin{enumerate}
	\item The space $\X$ is approximated by a sequence of nested subspaces $\set{\X_\m : \m \in \N}$ such that 
	\begin{equation*}
		\overline{\bigcup_{\m \in \N} \X_\m} = \overline{\lim_{\m \to \infty} \X_\m} = \X\;.
	\end{equation*} 
    \item A family of operators $\set{F_\n: \n \in \N}$ is constructed to approximate $F$ uniformly in a bounded neighborhood of a solution $\xdag$ of \autoref{eq:op}. Specifically, there exists a ball $\mathcal{B}_{\tt r}(\xdag)$ with radius ${ \tt r} > 0$ such that
    \begin{equation} \label{eq:unifrom}
       \norms{F[\x] - F_\n[\x]}_\Y \leq \rho_\n \quad \text{for all} \quad \x \in \mathcal{D}(F) \cap \mathcal{B}_{\tt r}(\xdag),
    \end{equation}
    where $\rho_\n \to 0$ as $\n \to \infty$. 
\end{enumerate}

For fixed $\eta > 0$, discretized Tikhonov regularization consists in computing 
\begin{equation} \label{eq:xdad} 
	\xdad \in \X_\m \cap \mathcal{D}(F)
\end{equation}
which satisfies 
\begin{equation} \label{eq:eta}
\norms{F_\n[\xdad]-\yd}_\Y^2 + \alpha\norms{\xdad-\x^{(0)}}_\X^2 \leq \inf_{\x \in \X_\m \cap \mathcal{D}(F)} \set{
   \norms{F_\n[\x]-\yd}_\Y^2 + \alpha\norms{\x-\x^{(0)}}_\X^2} + \eta\;.
\end{equation} 
We call $\xdad$ approximate minimizer with tolerance $\eta$ of the functional (see \cite{NeuSch90,PoeResSch10})
\begin{equation} \label{eq:discTik}
	\Tik_{\m\n}^{\alpha\delta}[\x] :=
		\norms{F_\n[\x]-\yd}_\Y^2 + \alpha\norms{\x-\x^{(0)}}_\X^2\;.
\end{equation}
In this paper, we focus on investigating the use of surrogate operator approximations, such as neural operators learned from training data, in Tikhonov regularization, and, for simplicity of presentation, we leave out the approximations of the elements of $\X$. Therefore, we consider the analysis of approximate minimizers $\x_\n^{\alpha\delta\eta} \in \mathcal{D}(F) \subseteq X$ of the functional  
\begin{equation} \label{eq:discTik_s}
	\Tik_{\n}^{\alpha\delta}[\x] := \norms{F_\n[\x]-\yd}_\Y^2 + \alpha\norms{\x-\x^{(0)}}_\X^2\;.
\end{equation}

Traditionally, spline spaces have been used for $\X_\m$, and finite element operators have been used for $F_\n$ (see \cite{Neu89,NeuSch90,PoeResSch10}). In this setting, discretizations $\m$ and $\n$ can be determined such that $\x_{\m\n}^{\alpha\delta\eta}$ is an optimal approximation in $\X_\m$ of the minimum-norm solution $\xdag$ of \autoref{eq:op}. The conceptual difference of neural operators is that they are learned from training data (see \autoref{eq:S} below).

Neural operators, as used in this paper, are defined as DeepONet's in \cite{LuJinPanZhaKar21}. The idea of neural operator traces back to \cite{CheChe93,TiaHon95}, where the concepts of learning functionals and operators were first developed. Building on the DeepONet framework and inspired by the physics informed neural networks \cite{RaiPerKar19}, other type of neural operators have been developed in \cite{WanWanPer21}. In parallel with DeepONet-based implementations, the Fourier neural operator has been introduced in \cite{LiKovAziLiuBha20_report}.  

The \emph{quantitative} error estimates of neural operators are not as advanced as those for finite difference and finite element approximation operators. Therefore, in turn, \emph{quantitative} regularization theory, as established for classical approximation methods in \cite{Neu89,NeuSch90,PoeResSch10}, needs to be developed, which is a concern of this paper. To motivate the challenges, we study simple examples from the inverse problems literature (see \cite{EngKunNeu89,EngHanNeu96}). The outline of the paper is as follows:
\begin{enumerate}
	\item \textbf{Examples of inverse problems:} First, we recall two examples of inverse problems, originally proposed in \cite{EngKunNeu89}, which demonstrate the optimal interactions between discretizations $\m$, $\n$, noise $\delta$, and the regularization parameter $\alpha$. We consider these examples first in the classic setting with finite element approximations $F_\n$ (see \autoref{se:examples}), and subsequently with neural operators learned from training samples (see \autoref{sec:basisrep} and \autoref{se:generalization}).
	\item \textbf{Challenges of neural operators:} We recall the definition of \emph{neural operators} from \cite{LuJinPanZhaKar21} and highlight some challenges when applying them as part of a regularization method (see \autoref{se:deeponet}). We propose modifications to neural operators that enable their use as surrogate operators for approximately solving \autoref{eq:op} (see \autoref{sec:basisrep} and \autoref{se:generalization}). For this paper, due to the Hilbert space setting, we require network operators to be defined on Sobolev and Lebesgue spaces. 
	\item \textbf{Priors for neural operators:} Training a neural operator involves solving for a high-dimensional set of parameters. Since the training process is highly nonlinear, strategies for determining appropriate priors are essential to identify locally optimal training parameters (see for instance \cite{SchHofNas23}). One such strategy is discussed in \autoref{se:orthogonal}.  
	\item In \autoref{sec:numerics}, we present some numerical results for the basic examples introduced in \autoref{se:examples}.
\end{enumerate}

\section{General assumptions} \label{sec:assump}
Throughout this paper we use the following notation:
\begin{notation} \label{not:xy}
	$\X = \set{\x: \Omega_\X \subseteq \R^m \to \R}$ and $\Y=\set{\y:\Omega_\Y \subseteq \R^n \to \R}$ are spaces of functions with norms $\norms{\cdot}_\X$ and $\norms{\cdot}_\Y$, respectively. We assume that $\Omega_\X$ and $\Omega_\Y$ are bounded domains with piecewise smooth boundaries and that they satisfy the cone property (see \cite{Ada75}). We refer to $\Y$ as the \emph{data space} and $\X$ as the \emph{image space} in accordance with the terminology of \cite{AspKorSch20,AspFriKorSch21}. 
\end{notation}

Moreover, we make the following assumptions:
\begin{itemize} 
	\item The operator $F$ is \emph{weakly sequentially closed} between Hilbert spaces $\X$ and $\Y$, meaning that, if the sequence $(\x_k)_{k \in \N} \subseteq \mathcal{D}(F)$ converges weakly to $\x$ in $\X$ and $(F[\x_k])_{k \in \N} \subseteq \Y$ converges weakly to $\y$ in $\Y$, then $\x \in \mathcal{D}(F)$ and $F[\x]=\y$.
	\item The operator $F$ is continuous, meaning that for every sequence $(\x_k)_{k \in \N} \subseteq \dom{F}$, if $\x_k \to \x$ in $\X$ and $\x \in \dom{F}$, then $F[\x_k] \to F[\x]$ in $\Y$.
\end{itemize}
	Under the assumptions of weak sequential closedness and continuity of $F$, it follows that:
\begin{enumerate}
	\item If \autoref{eq:op} has a solution, then there exists an $\x^{(0)}$-minimum-norm solution $\xdag$. That is, 
	\begin{equation} \label{eq:argmin}
		\xdag := \text{argmin} \set{ \norms{\x-\x^{(0)}}_\X : \x \in \mathcal{D}(F), \x \text{ solves } \autoref{eq:op}}\;.
	\end{equation}
	\item For every $\alpha,\delta >0$, the Tikhonov functional $\Tik^{\alpha\delta}$ defined in \autoref{eq:tik} attains a minimizer (is well-posed), it is stable and convergent in the sense of a regularization method (see \cite[Theorem 10.2 \& Theorem 10.3]{EngHanNeu96}).
\end{enumerate}
Moreover, we assume that 
\begin{itemize}
	\item the operator $F$ is Fr\'echet differentiable in an open ball $\mathcal{B}_{\tt r}(\xdag) \subseteq \mathcal{D}(F) \subseteq \X$ with ${\tt r} > 2\norm{\xdag - \x^{(0)}}_\X$, and that
	\item the Fr\'echet derivative is Lipschitz continuous in $\mathcal{B}_{\tt r}(\xdag)$. That is, there exist $L_1 > 0$ such that for all $\x \in \mathcal{B}_{\tt r}(\xdag)$ 
	\begin{equation} \label{eq:lipschitz}
		\norms{F'[\x]-F'[\xdag]} \leq L_1 \norms{\x-\xdag}_\X  \text{ for all } \norms{\x - \xdag}_\X \leq {\tt r}\;,
	\end{equation}
	Here $\norms{F'[\x]-F'[\xdag]}$ denotes the operator norm of $F'[\x]-F'[\xdag]$.
	\end{itemize}

The Fr\'echet differentiability is used to prove convergence rates of Tikhonov regularized solutions (see \cite[Theorem 10.4]{EngHanNeu96}). This is the main concern of this paper.
We use the following lemma in the course of the text: 
\begin{lemma} \label{le:lipschitz}
	Let $F$ be Fr\'echet differentiable with Lipschitz continuous Fr\'echet derivative in $\mathcal{B}_{\tt r}(\xdag)$.
	Denote by $L^\dagger = \norm{F'[\xdag]}$ the operator norm of $F'[\xdag]$, where $\xdag$ is an $\x^{(0)}$-minimum-norm solution, then for all $\x,\x_1,\x_2 \in \mathcal{B}_{\tt r}(\xdag)$ we have
	\begin{equation} \label{eq:norm}
		\norm{F'[\x]} \leq L_0
	\end{equation}
	and
	\begin{equation} \label{eq:lipschitz0}
		\norm{F[\x_1]-F[\x_2]}_\Y \leq L_0 \norm{\x_1-\x_2}_\X \text{ with } L_0=L^\dagger + L_1{\tt r}\;.
	\end{equation}
\end{lemma}
\begin{proof}
For all $\z \in \X$ with $\norm{\z}_\X \leq 1$ we have
\begin{equation*}
	\begin{aligned}
		\norm{F'[\x] \z}_\Y &\leq \norm{F'[\xdag] \z}_\Y + \norm{(F'[\x]-F'[\xdag]) \z}_\Y
		\leq \left( L^\dagger + L_1 \norm{\x-\xdag}_\X \right) \norm{\z}_\X\,,
	\end{aligned}
\end{equation*}	
which gives the assertion of \autoref{eq:norm}.
Moreover, we get from \autoref{eq:norm} that
\begin{equation} \label{eq:l0} \begin{aligned}
		\norm{F[\x_1]-F[\x_2]}_\Y &\leq \int_0^1 \norm{F'[\x_1+t(\x_2-\x_1)](\x_2-\x_1)}_\Y \mathrm{d}t \\&
		\leq \int_0^1 \norm{\left(F'[\x_1+t(\x_2-\x_1)-F'[\xdag]\right) (\x_2-\x_1)}_\Y \mathrm{d}t + \norm{F'[\xdag](\x_2-\x_1)}_\Y\\
		&\leq L_1 \left(\int_0^1 \norm{\x_1+t(\x_2-\x_1)-\xdag}_\X \mathrm{d}t\right) \norm{\x_2-\x_1}_\X + L^\dagger\norm{\x_2-\x_1}_\X \;.
	\end{aligned}
\end{equation}
Now, we note that 
\begin{equation*}
	\begin{aligned}
		\int_0^1 \norm{\x_1+t(\x_2-\x_1)-\xdag}_\X \mathrm{d}t 
		&= \int_0^1 \norm{(1-t)(\x_1-\xdag) +t (\x_2-\xdag)}_\X \mathrm{d}t\\
		&\leq \norm{\x_1-\xdag}_\X \int_0^1 (1-t) \mathrm{d}t  + \norm{\x_2-\xdag}_\X
		\int_0^1t \mathrm{d}t \leq {\tt r}\,.
	\end{aligned}
\end{equation*}
Therefore, we get from \autoref{eq:l0} that
\begin{equation*}\begin{aligned}
	\norm{F[\x_1]-F[\x_2]}_\Y &\leq (L^\dagger + L_1{\tt r}) \norm{\x_2-\x_1}_\X\;.
\end{aligned}
\end{equation*}
\end{proof}
Before we go deeper in the analysis of neural operators, we bring some motivating examples.

\section{Examples (classics with finite elements)} \label{se:examples}

In this section, we review two simple inverse problems and how they are solved with Tikhonov regularization with classical finite element operator approximations. 

\begin{example}[The $a$-example]\label{ex: darcyflow}
	For a given ${\tt f} \in  L^2(0,1)$, we consider the differential equation for $\y$:
	\begin{equation} \label{eq:darcyBVP}
		\begin{cases}
			\quad -\left(\x \y'\right)'(s) &= {\tt f}(s) \text{ for all }s \in (0,1),\\
			\quad  \y(0) = \y(1) &= 0,
		\end{cases}		
	\end{equation}
	where $\x \in \X:=H^1(0,1)$ is from the image space. The parameter $\x$ is often denoted by $a$ in the literature (see \cite{EngKunNeu89,NeuSch90}), which motivates our denomination of the example. 
	For this problem, the forward operator is defined as
	\begin{equation}\label{eq:darcyflow}
		\begin{aligned}
			F: \mathcal{D}(F):= \set{\x \in H^1(0,1): \x \geq \nu>0} & \to L^2(0,1)\;\\
			\x & \mapsto \y =: F[\x].
		\end{aligned}
	\end{equation}
    We first consider the classical \emph{finite element approach}: 
	Let $\Y_\n$ denote the space of linear splines on a uniform mesh with mesh size $\n^{-1}$, vanishing at the boundary. The finite element approximation of \autoref{eq:darcyBVP} is $\y_\n \in \Y_\n$, which satisfies:
	\begin{equation}\label{eq:disc}
		\inner{\x\y_\n'}{\tt y'}_{L^2} = \inner{\tt f}{\tt y}_{L^2} \quad \text{for all} \quad \tt y \in \Y_\n.
	\end{equation}
    
	The finite element approximation $F_\n$ of the operator $F$ is given by:
	\begin{equation}\label{eq:fea}
		\begin{aligned}
			F^{\text{FE}}_\n: \mathcal{D}(F):= \set{\x \in H^1(0,1): \x \geq \nu>0} & \to L^2(0,1)\;\\
			\x & \mapsto \y_\n=: F^{\text{FE}}_\n[\x].
		\end{aligned}
	\end{equation}
	
	The approximation error of $F^{\text{FE}}_\n$ has been intensively studied and is given by (see, for instance, \cite{Aub67,Nit68,Cia78}):
	\begin{equation}\label{eq:a-aproxi-error}
		\norms{F[\x] - F^{\text{FE}}_\n[\x]}_{L^2} = \norms{\y-\y_\n}_{L^2} = \mathcal{O}((1+\norms{\x}_{H^1})\n^{-2})\;.
	\end{equation}
	That is, for ${\tt r} > 0$ fixed, we have  
	\begin{equation*}
		\rho_\n :=  \sup_{\x \in \mathcal{B}_{\tt r}(\xdag)} \norms{F[\x] - F^{\text{FE}}_\n[\x]}_{L^2} = \mathcal{O}(\n^{-2}).
	\end{equation*}
	With such estimates, it is possible to derive optimal convergence rates for $\x_{\m\n}^{\alpha\delta}$, which we recall here for the readers convenience. We summarize the essential assumptions for \cite[Corollary 3.5]{NeuSch90}:

    \begin{itemize}
        \item Let $\xdag\in H^2(0,1)$ and assume that $\tt f$ is sufficiently smooth such that $\y=\y^0=F[\xdag] \in H^2(0,1)\cap H^1_0(0,1)$ (this is the noise-free data). 
        \item We further assume that the \emph{source condition} (as postulated in \cite{EngKunNeu89,NeuSch90}) holds:
	   \begin{equation}\label{de:source_a}	
		  \begin{array}{c}
			\varphi:=\xdag-\x^{(0)} \in H^3(0,1) \text{ satisfies } \varphi'(0)=\varphi'(1)=0,\\
       r \in (0,1) \mapsto \int_0^r \frac{(I-\Delta)(\x^{(0)}-\xdag)}{\y'[\xdag]}(s) ds \in H^2(0,1)\cap H_0^1(0,1) \text{ has sufficiently small }L^2\text{ norm.}
            \end{array}
            \end{equation}
\end{itemize}
With $\eta$ as in \autoref{eq:eta}, we make the choice of parameters
		\begin{equation} \label{eq:par}
		 \alpha \sim \max \set{\delta,\n^{-2}}, \; \eta \sim \alpha^2,
		\end{equation}
	    which gives the optimal convergence rate for the regularized solution (as defined in \autoref{eq:xdad}): 
	    \begin{equation} \label{eq:es}
	    	\norms{\x_\n^{\alpha\delta\eta} - \xdag}_{H^1} = \mathcal{O}(\sqrt{\delta}+\n^{-1})\;.
	    \end{equation}
	\end{example}

\begin{example}[The $c$-example] \label{ex:2point} Let ${\tt f} \in L^2(0,1)$ be given. Now, we consider the differential equation
	\begin{equation}
		\begin{cases}\label{eq:invillpos}
			\quad -\y''(s) + \x(s)\y(s) &= {\tt f}(s) \text{ for all }s \in (0,1),\\
			\quad  \quad \quad \quad \y(0) = \y(1) &= 0,
		\end{cases}		
	\end{equation}
	with $\x \in L^2(0,1)$ from the set of images. $\x$ is often denoted by $c$ in the literature, which gives this example its name. Here, the forward operator is defined as 
	\begin{equation*}
		\begin{aligned}
			F: \mathcal{D}(F):= \set{\x \in L^2(0,1): \x \geq 0 \text{ a.e.} } & \to L^2(0,1)\;.\\
			\x & \mapsto \y =: F[\x]
		\end{aligned}
	\end{equation*}
	The finite element approximation of the operator $F$ on the finite-dimensional subspace of linear spline $\Y_\n$ with $\n-1$ internal nodes of $\Y = L^2(0,1)$ is given by
	\begin{equation} \label{eq:fec}
		\begin{aligned}
			F^{\text{FE}}_\n: \mathcal{D}(F):= \set{\x \in L^2(0,1): \x \geq 0 \text{ a.e.}} & \to L^2(0,1)\;.\\
			\x & \mapsto \y_\n=: F^{\text{FE}}_\n[\x]
		\end{aligned}
	\end{equation}
	Here, $\y_\n\in\Y_\n$ is the unique solution of 
	\begin{equation*}
		\inner{\y_\n'}{\tt y'}_{L^2}+\inner{\x\y_\n}{\tt y}_{L^2} = \inner{f}{\tt y}_{L^2} \quad \text{ for all } \quad \tt y\in \Y_\n.
	\end{equation*}

In the general convergence rates analysis from \cite{NeuSch90}, an additional assumption that $\xdag$ is in the interior of $\dom{F}$ is used. For every $\xdag \in \dom{F} \subseteq L^2(0,1)$, this assumption cannot be verified due to the constraint that $\xdag \geq 0$. However, it was also shown there that, for this particular example, the assumption of being an interior point can be circumvented by the condition 
$0 < \xdag \in H^2(0,1)$. Note that every $H^1(0,1)$ function is uniformly continuous in $[0,1]$ and therefore, it is strictly positive. Moreover, we assume that $\xdag$ satisfies 
    \begin{equation}\label{de:source_c}
     \frac{\x^{(0)}-\xdag}{\y[\xdag]} \in H^2(0,1) \cap H_0^1(0,1) \text{ with } \text{a sufficiently small $L^2$ norm}.
    \end{equation}
    Analogously to \autoref{ex: darcyflow}, we see that for ${\tt r} > 0$ fixed, we have  
    \begin{equation*}
    	\rho_\n :=  \sup_{\x \in \mathcal{B}_{\tt r}(\xdag)} \norms{F[\x] - F^{\text{FE}}_\n[\x]}_{L^2} = \mathcal{O}(\n^{-2}).
    \end{equation*}
    This suggests to choose 
	\begin{equation} \label{eq:par_c}
		\alpha \sim \max \set{\delta,\n^{-2}}, \; \eta \sim \alpha^2,
	\end{equation}
	such that we obtain the following convergence rates for the regularized solutions: 
	\begin{equation} \label{eq:es_c}
		\norms{\x_\n^{\alpha\delta\eta} - \xdag}_{L^2} = \mathcal{O}(\sqrt{\delta}+\n^{-1})\;.
	\end{equation}
\end{example}

We emphasize that in \cite{NeuSch90}, additional results with various kinds of smoothness assumptions have been shown.

The goal of this paper is to derive error estimates of the form \autoref{eq:es} based on neural operators. 

\begin{remark}
    The source conditions \autoref{de:source_a} and \autoref{de:source_c} differ by the operator $(I-\Delta)$, which appears in \autoref{de:source_a} but not in \autoref{de:source_c}. The difference arises from the domain setting of the operator $F$. The source conditions are based on the existence of $\omega \in \Y$ such that $\xdag - \x^{(0)} = F'[\xdag]^*\omega$. In the $a$-example, $\x \in H^1(0,1)$, and $(I-\Delta)^{-1}$ arises when computing the adjoint $F'[\xdag]^*: L^2(0,1) \to H^1(0,1)$ since $(I-\Delta)^{-1}$ is the adjoint of the embedding from $H^1$ into $L^2$. In the $c$-example, $\x \in L^2(0,1)$, so no non-trivial embedding operator is needed. For more details see \cite{EngKunNeu89}.
\end{remark}

\section{Neural functions, functionals and operators} \label{se:deeponet}

In the following, we review universal approximation theorems with neural functions, functionals, and operators. The latter are studied in \cite{TiaHon95}, where they are considered to be defined on spaces of continuous functions. The approximations are defined based on neural networks, which in turn are defined via activation functions (see, for instance, \cite{Cyb89,KirMejPerSchShi25_preprint}). Commonly used activation functions are of sigmoid type: 
\begin{definition}[Sigmoid function]\label{def:sigm_act} A strictly monotonically increasing and differentiable function $\sigma: \R \to \R$ is called \emph{sigmoidal} if it satisfies
	\begin{equation}\label{eq:sigm_act} \sigma(t) \to 
		\begin{cases}
			1 & \text{as } t \to +\infty \\
			0 & \text{as } t \to -\infty 
		\end{cases}.
	\end{equation}
\end{definition}
For examples, the 
\begin{itemize}
	\item logistic sigmoid function $t \in \R \to \text{sig}(t) =(1+\exp(-t))^{-1}$, 
	\item the scaled and shifted hyperbolic tangens $t \in \R \to \tanh(t)/2+1/2$ and  
	\item the scaled and shifted arctangens $t \in \R \to \text{atan}(t)/\pi+1/2$
\end{itemize}
are sigmoidal. 
Sigmoid functions have been introduced as a universal tool to approximate nonlinear functions (see \cite{Cyb89,HorStiWhi89b}) as well as functionals and operators (see \cite{CheChe93,TiaHon95}).

\subsection{Approximation of functions}
In the following, we review an approximation result with neural functions in the classical setting of continuous functions. These results are based on compact sets:
\begin{definition}[Compactness] \label{def:compact} 
	Let $(\V,\norms{\cdot}_\V)$ be a normed space and let ${\tt K} \subseteq \V$ be a subset. We say that ${\tt K}$ is compact in $\V$ if every sequence in ${\tt K}$ has a subsequence that converges in the $\norms{\cdot}_\V$ norm to an element in ${\tt K}$.
\end{definition}
In the following, we state some approximation results for continuous functions with neural network functions.

\begin{theorem}[Approximation of functions (Theorem 3, \cite{TiaHon95})] \label{th:function apx} 
	Let $\sigma$ be a  sigmoid activation function as defined in \autoref{def:sigm_act}. Furthermore let $\Omega$ be a bounded domain in $\R^n$ and let $\V=C(\overline{\Omega})$ be the Banach space of continuous functions equipped with the supremum norm $\norms{\cdot}_{L^\infty(\Omega)}$. Moreover, let ${\tt K}$ be a compact subset of $\V$.
	Then, for every $\ve >0$, there exist a number $J := J(\ve) \in \N$ and coefficients 
	$$\zeta_j := \zeta_j(\ve) \in \R, \; \bw_j:=\bw_j(\ve) \in \R^n, \; j = 1,\dots,J\,,$$ 
	such that for every ${\tt u} \in {\tt K}$ there exists coefficients 
	$$c_j := c_j(\ve,{\tt u}) \in \R, \; j = 1,\dots,J\,,$$ 
	such that the function
	\begin{equation*}
		\bt \in \overline{\Omega} \mapsto 
		{\tt u}_\n(\bt):=\sum_{j=1}^{J} c_j\sigma\left(\bw_j^T\bt + \zeta_j\right)
	\end{equation*}
	satisfies  
	\begin{equation}\label{eq:uniform}
	\norms{{\tt u}-{\tt u}_\n}_{L^\infty(\Omega)} \leq \ve\;.
	\end{equation}
	We summarize the coefficient of ${\tt u}_\n$ in a vector:
	\begin{equation*}
		\n =\abs{\mathcal{T_\n}} = J(2+n)
		\text{ and }
		\mathcal{T}_\n = \begin{pmatrix} \underbrace{ c_{j}}_{\in \R} & \underbrace{ \bw_j}_{\in \R^n} & \underbrace{ \zeta_j}_{\in \R} &\end{pmatrix}_{\tiny \begin{array}{c} j=1,\dots,J\end{array}}.
	\end{equation*}
\end{theorem}
\begin{remark}
 We emphasize that the estimate \autoref{eq:uniform} holds uniformly for all elements ${\tt u}$ on the compact set ${\tt K}$ of $\V$.
\end{remark}

In the following, we review approximation of functionals and operators defined on spaces of continuous functions with \emph{neural functionals and operators:}
\subsection{Approximation of functionals}

The universal approximation theorem for functions has been extended to functionals in \cite{TiaHon95}: We emphasize that we use here the notation $\X=C(\overline{\Omega_\X})$ for consistency reasons, although this is not a Hilbert space as assumed in Notation \autoref{not:xy}.
\begin{theorem}[Approximation of functionals (Theorem 4, \cite{TiaHon95})]\label{th:functional_apx_cont}
	Let $\sigma$ be a sigmoid activation function as defined in \autoref{def:sigm_act}. Let 
	$\Omega_\X \subseteq \R^m$ be a bounded domain.
	Moreover, let 
	\begin{equation*}
		F: \dom{F} \subseteq C(\overline{\Omega_\X}) \to \R
	\end{equation*}
	be a continuous functional with respect to the $\norm{\cdot}_{L^\infty(\Omega_\X)}$-topology, and suppose that the domain of $F$, $\dom{F}$, is compact in $C(\overline{\Omega_\X})$. Then, for every $\ve >0$ and every $\x \in \dom{F}$, there exist numbers
	$$K := K(\ve), \; L := L(\ve) \in \N,$$
	sampling points 
	$$\bs_1 := \bs_1(\ve), \dots, \bs_{L}:=\bs_L(\ve) \in \overline{\Omega_\X}\,,$$ 
	and real numbers
	$$ w_{k,l} := w_{k,l}(\ve), \theta_k := \theta_k(\ve) \text{ for } k=1,\dots, K, l=1,\dots, L,$$ 
	(all of the above coefficients are independent of $\x$)
	and coefficients, which are dependent of $\x$,  
	$$ C_k := C_k(\ve,\x),$$
	such that the functional
	\begin{equation}\label{eq:functional_finite}
		\x \in \dom{F} \mapsto F_\n[\x] :=  \sum_{k=1}^{K} C_k \sigma\left(\sum_{l=1}^{L} w_{k,l}\x(\bs_l) + \theta_k\right)
	\end{equation} 
	satisfies 
	\begin{equation} \label{eq:functional_approx}
		\abs{F[\x]-F_\n[\x]} \leq \ve \,.
	\end{equation}
	We abbreviate the coefficients of the functional as follows:
	\begin{equation*}
		\n = \abs{\mathcal{T}_\n} = K(L+2) + Lm
		\text{ and }
		\mathcal{T}_\n:= \begin{pmatrix}  \underbrace{ C_{k}}_{\in \R} & \underbrace{ w_{k,l}}_{\in \R} & \underbrace{ \theta_k}_{\in \R} & \underbrace{ \bs_l}_{\in \R^m} &\end{pmatrix}_{\tiny \begin{array}{c} k=1,\dots,K\\ l=1,\dots,L\end{array}}\;.
	\end{equation*}
\end{theorem}

In the next step, we review approximation of $F$ with neural operators:
\subsection{Approximation of operators}
\emph{Neural operators} are novel tools for approximating \emph{nonlinear} operators (see \cite{LuJinPanZhaKar21}). The approach extends neural network approximations of continuous functions ${\tt u}: \overline{\Omega} \to \R$ and functionals $F: \dom{F} \subseteq C(\overline{\Omega_\X}) \to \R$ to operators $F: \dom{F} \subseteq C(\overline{\Omega_\X}) \to L^2(\Omega_\Y)$.

\begin{definition}[Neural operator] \label{de:deeponet} 
	Let $\sigma$ be a bounded sigmoid activation function as defined in \autoref{def:sigm_act}. 
	Let $\Omega_\X \subseteq \R^m$ and 
	$\Omega_\Y \subseteq \R^n$ be bounded domains, respectively.
	The operator $F_\n$, defined by
\begin{equation} \label{eq:deeponet}
	\begin{aligned}
		F_\n: \dom{F} \subseteq C(\overline{\Omega_\X}) &\to L^2(\Omega_\Y)\,,\\
		{\tt x} &\mapsto \left( \bt \in \Omega_\Y \mapsto \sum_{j=1}^{J} \sum_{k=1}^{K} \alpha_{j,k} \sigma \left( \sum_{l=1}^{L} w_{j,k,l} {\tt x}(\bs_l)+\theta_{j,k}\right) \sigma (\bw_j^T \bt +\zeta_j) \right)
	\end{aligned}
\end{equation}
is called a \emph{neural operator}. 
The coefficients are summarized in a vector
\begin{equation}\label{eq:D}
\mathcal{T}_\n = \begin{pmatrix} \underbrace{ \alpha_{j,k}}_{\in \R} & \underbrace{ w_{j,k,l}}_{\in \R} & \underbrace{ \bw_j}_{\in \R^n} & \underbrace{ \theta_{j,k}}_{\in \R} & \underbrace{ \bs_l}_{\in \Omega_\X \subseteq \R^m} 
	& \underbrace{ \zeta_j}_{\in \R} &\end{pmatrix}_{\tiny \begin{array}{c} j=1,\dots,J\\ k=1,\dots,K\\ l=1,\dots,L \end{array}} \;.
	\end{equation}
	with 
	\begin{equation*}
	\n = \abs{\mathcal{T}_\n} = 
	J(K(L + 2) + n + 1) + Lm
	\end{equation*}
\end{definition}
Again, in comparison with \autoref{th:functional apx}, we have a significant freedom in choosing the coefficients. In fact, many of them are customized (see \autoref{ex:customized}).

\begin{theorem}[Approximation of operators (Theorem 5, \cite{TiaHon95})]\label{th:operatorapxc} 
	Let $\sigma$ be a  sigmoid activation function. Let $\Omega_\X, \Omega_\Y$ be bounded domains in $\R^m, \R^n$, respectively, where $\Omega_\X$ is bounded with piecewise $C^1$ boundary. Moreover, let 
	\begin{align*}
		F: \dom{F} \subseteq C(\overline{\Omega_\X}) &\to L^2(\Omega_\Y)
	\end{align*}
	be a continuous operator with respect to the $\norm{\cdot}_{L^\infty(\Omega_\X)}$-topology on the domain of $F$, $\dom{F}$, and assume that $\dom{F}$ is compact in $C(\overline{\Omega_\X})$. 	
	Then, for every $\ve >0$ and every $\x \in \dom{F}$ there exist positive integers 
	$$J := J(\ve), \; K := K(\ve),\; L := L(\ve) \in \N\,,$$ 
	and for all $j=1,\dots,J$, $k=1,\dots,K$, $l=1,\dots, L$, there exists real parameters
	$$ w_{j,k,l}:= w_{j,k,l}(\ve), \; \theta_{j,k} := \theta_{j,k}(\ve),\;
	\zeta_{j}:= \zeta_j(\ve) \in \R$$ 
	and vectors $$\bw_j:=\bw_j(\ve) \in \R^n$$ as well as sampling points 
	$$\bs_l:=\bs_l(\ve) \in \Omega_\X\,,$$
	(all of them are independent of $\x$). Moreover, there exist coefficients (depending on $\x$ and $\ve$) 
	$$\alpha_{j,k} := \alpha_{j,k}(\x,\ve)$$ such that $F_\n$ from \autoref{eq:deeponet} satisfies
	\begin{equation*}
		\norms{F[\x]-F_\n[\x]}_{L^2(\Omega_\Y)} \leq \ve\;.
	\end{equation*}
\end{theorem}

\subsection{Training of neural operators}
Efficient computing coefficients of a neural operator, also referred to training of a network, is essential for accurate approximation of the operator $F$. The training problem is formulated as follows:
\begin{definition}[Training of neural operators] 
The coefficients $\mathcal{T}_\n$ of a neural operator are determined through supervised training samples 
\begin{equation} \label{eq:S}
	\mathcal{S}_\no := \set{(\hat{\x}^{(\ell)},\hat{\y}^{(\ell)}) : \hat{\y}^{(\ell)} = \op{\hat{\x}^{(\ell)}} , \ell=0,1,\ldots,\no}\,,
\end{equation}
such that
\begin{equation} \label{eq:learn}
	\begin{aligned}
		\hat{\y}^{(\ell)}(\bt_\rho) 
		&= \sum_{j=1}^{J} \sum_{k=1}^{K} \alpha_{j,k} \sigma \left( \sum_{l=1}^{L} w_{j,k,l} \hat{\x}^{(\ell)}(\bs_l)+\theta_{j,k}\right) \sigma (\bw_j^T \bt_\rho +\zeta_j) = F_\n[\hat{\x}^{(\ell)}](\bt_\rho)\;.
	\end{aligned}
\end{equation}
Here $\set{\bt_\rho : \rho=1,\ldots,Q}$ are sampling points in $\Omega_\Y$, which we assume to be given. 
We also use the training data centered at $(\hat{\x}^{(0)},\hat{\y}^{(0)})$, which is given by 
\begin{equation} \label{eq:S0}
	\mathcal{S}_\no^{(0)} := \set{(\x^{(\ell)},\y^{(\ell)}) := (\hat{\x}^{(\ell)},\hat{\y}^{(\ell)}) -  (\hat{\x}^{(0)},\hat{\y}^{(0)}): \ell=1,\ldots,\no}\;.
\end{equation}
\end{definition}

We summarize some observations on neural operators, which guide the further paper:
\begin{enumerate}
	\item The \emph{universal approximation theorem for operators} from \cite{TiaHon95,LuJinPanZhaKar21} guarantees the existence of coefficients 
	$\mathcal{T}_\n$ such that the corresponding neural operator $F_\n$ uniformly approximates the operator $F$ on a compact subset of continuous images from $\X$. However, this analysis is not quantitative in terms of topologies, which are required for inverse problems applications (see \autoref{ex: darcyflow} and \autoref{ex:2point}).
	\item Neural operators (see also \autoref{de:deeponet}) can be applied to elements of $\X$ that allow for point evaluations (see \cite{WanWanPer21}), which limits the applicability. For instance, in \autoref{ex:2point}, the domain of $F$ consists of $L^2$ functions, and therefore neural operators cannot be applied directly. The generalization to $L^2$ function spaces is considered in \autoref{se:generalization}). In particular, approximation properties of surrogate operators must be analyzed in Lebesgue and Sobolev spaces (see \autoref{se:examples}). Sobolev spaces are somewhat easier to handle because they can often be compactly embedded into spaces of continuous functions (see \autoref{sec:sobolevproof}). This fact is used for analyzing the $a$-example (see \autoref{ex: darcyflow}) with a neural operator, which surrogates the nonlinear operator $F$. 
    \item Training neural operators is highly nonlinear and therefore a complex computational problem. Efficient implementation requires priors for determining coefficients $\mathcal{T}_\n$ and their counts $J, K, L$ (see \autoref{eq:D}) before initiating the actual training process using a gradient descent algorithm. 
    \item \autoref{eq:learn} does not need to hold exactly but only with up to some tolerance $\ve$. According to the theoretical results (see \autoref{th:operatorapxc}), the coefficients $\alpha_{j,k}$ can be chosen in dependency of $\hat{\x}^{(\ell)}$, $\ell = 1,\ldots,\no$, which is not reflected in the formula of \autoref{eq:learn}. However, in \autoref{eq:learn}, we did not specify the amount of parameters $J$, $K$, $L$. The calculation in later section actually indicate that they should be dependent on $\no$. See \autoref{eq:nldeepm} below.
\end{enumerate}
We begin with studying the third item and determine a good set of prior coefficients for $\mathcal{T}_\n$. We see below that several parameters of the neural operator can in fact be predetermined (customized in the language of neural networks). We recall that the dependency of parameters described in \autoref{th:function apx}, \autoref{th:functional_apx_cont}, and \autoref{th:operatorapxc} reveals the potential of customizing parameters, which significantly simplifies the training process.

\section{Priors for training of neural operators} \label{se:orthogonal}
We propose the following two-step strategy to determine prior coefficients $\mathcal{T}_\n$ (see \autoref{eq:D}) of the surrogate operator $F_\n$ (see \autoref{eq:deeponet}): 
	\begin{enumerate}
	\item We approximate the operator $F$ by a \emph{linearization} (see $F_\no^\#$ in \autoref{eq:nldeep}), where
	\item the expanding functions are \emph{non-linearly} approximated by neural network functions, leading to $\tilde{F}_\n$, which is a nonlinear operator with respect to the images $\x$ (see \autoref{eq:nldeep}).
\end{enumerate}
The expansion of the linearization allows us to determine, in particular, an adequate number of coefficients $J$, $K$ and $L$ of the neural operator $F_\n$ (see \autoref{eq:deeponet}).

\subsection{Neural operators for linear operator regression} \label{ss:lorl}
In this subsection, we assume that the operator $F$ in \autoref{eq:op} is linear. Later this will be a linearization of a nonlinear operator. In order to learn the linear finite dimensional operator $F_\no^\#$, we use a strategy developed in \cite{AspFriSch24_preprint}, which consists in orthonormalizing the centered training images $\x^{(\ell)}$, $\ell=1,\dots,\no$ (see \autoref{eq:S0}) with Gram-Schmidt and computing the according data. Gram-Schmidt requires that the training images $\x^{(\ell)}$, $\ell = 1,\ldots,\no$, are linearly independent, which we assume to hold. Additionally, we assume that the linear operator $F$ has a trivial nullspace. Consequently, the training data $\y^{(\ell)}$ are also linearly independent. More sophisticated algorithms have been investigated in \cite{HanSch26}. In particular, the techniques developed there do not require the assumptions that the training images $\x^{(\ell)}$, $\ell = 1,\ldots,\no$, are linearly independent and that $F$ has a trivial nullspace. 

In the following, we denote by $\set{\ulx^{(\ell)}: \ell=1,\dots,\no}$ an orthonormalized family obtained from  $\set{\x^{(\ell)}: \ell=1,\dots,\no}$,  which have the same span and we denote by $\uly^{(\ell)} = F\ulx^{(\ell)}$ the according family of data. We emphasize that under the above assumptions (linear independence and trivial nullspace of $F$), there exist explicit formulas for $\uly^{(\ell)}$ (see \cite{AspKorSch20,AspBanOekSch20}).
Let 
	\begin{equation} \label{eq:sample}
		\begin{aligned}
			\X_\no &:= \text{span}\set{\ulx^{(\ell)}:\ell = 1,\dots,{\no}} = \text{span}\set{\x^{(\ell)}:\ell = 1,\dots,{\no}}
			\text{ and} \\ \quad \Y_\no &:= \text{span}\set{\uly^{(\ell)}:\ell = 1,\dots,{\no}}= \text{span}\set{\y^{(\ell)}:\ell = 1,\dots,{\no}},
		\end{aligned}
	\end{equation}
	which are finite dimensional subsets of $\X$ and $\Y$, respectively. For every image $\x \in \X_\no$, we have the basis expansion
	\begin{equation*}
		\x = \sum_{\ell=1}^\no \inner{\x}{\ulx^{(\ell)}}_\X \ulx^{(\ell)},
	\end{equation*}
	and from the linearity of $F$, we obtain
	\begin{equation*} 
		\y = F\x = \sum_{\ell=1}^\no \inner{\x}{\ulx^{(\ell)}}_\X F \ulx^{(\ell)} = 
		\sum_{\ell=1}^\no \inner{\x}{\ulx^{(\ell)}}_\X \uly^{(\ell)}\; \in \Y_\no\;.
	\end{equation*}
    Now, we define an operator
	\begin{equation} \label{eq:expansion}
		\begin{aligned}
			F_\no^\#: \X &\to \Y\;\\
			\x &\mapsto \sum_{\ell=1}^\no \inner{\x}{\ulx^{(\ell)}}_\X \uly^{(\ell)}.
		\end{aligned}
	\end{equation}
	Note that $F_\no^\#$ and $F$ coincide on $\X_\no$, that is, $F_\no^\#|_{\X_\no} = F|_{\X_\no}$. Moreover, $F_\no^\# = 0$ on $\X_\no^\perp$.
	The operator $F_\no^\#$ is expressed as a sum of products of functionals $\x \mapsto \inner{\x}{\ulx^{(\ell)}}_\X$, which can be approximated by networks as in \autoref{th:functional_apx_cont}, and functions
	$\bt \mapsto \uly^{(\ell)}(\bt)$, which can be approximated using standard neural network as in \autoref{th:function apx}.
	
	By substituting the neural network approximations of functionals (see \autoref{th:functional apx}) and functions (see \autoref{th:function apx}) into \autoref{eq:expansion}, and 
	under the assumption that every function $\uly^{(\ell)}, \ell = 1,\dots,\no$, is continuous, we obtain that
	\begin{equation} \label{eq:nldeep}
		\begin{aligned}
			F_\no^\# \x(\bt) &= 
			\sum_{\ell=1}^\no \inner{\x}{\ulx^{(\ell)}}_\X \uly^{(\ell)}(\bt) \\
			&\approx \sum_{\ell=1}^\no \left(\sum_{k=1}^{K(\ell)} C_k^{(\ell)}\sigma \left(\sum_{l=1}^{L}w_{k,l}^{(\ell)} \x(\bs_l)+\theta_{k}^{(\ell)}\right)\right)\left(\sum_{j=1}^{J(\ell)}c_j^{(\ell)}\sigma\left(\bw_j ^{(\ell)T}\bt + \zeta_j^{(\ell)}\right)\right)\\
			&= \sum_{\ell=1}^\no \sum_{j=1}^{J(\ell)} \sum_{k=1}^{K(\ell)}\alpha_{j,k}^{(\ell)}\sigma \left(\sum_{l=1}^{L}w_{k,l}^{(\ell)}\x(\bs_l) + \theta_k^{(\ell)}\right) \sigma\left(\bw_j^{(\ell) T}\bt+\zeta_j^{(\ell)}\right):=\tilde{F}_\n[\x](\bt)
		\end{aligned}
	\end{equation}
for all $\x \in \X$ and $\bt \in \Omega_\Y$. Note that we already did a customization and defined the sampling points $\bs_l$, $l=1,\ldots,L(\ell)$, independent of $\ell=1,\ldots,\no$. Due to the nonlinearity of the activation function $\sigma$, $\tilde{F}_\n$ is a nonlinear operator, although we approximated the linear operator $F_\no^\#$. On the negative side, this approximation destroys the simplicity of a linear system, increases training complexity and makes the error analysis more complicated. On the positive side, a nonlinear network can approximate both linear and nonlinear operators. Note that, for nonlinear operators we use brackets, $[\cdot]$, for operator evaluations, which are left out for linear operators. In comparison to \autoref{eq:deeponet}, there is an additional summation over $\ell = 1, \ldots, \no$ in \autoref{eq:nldeep}. However, with an inductive mapping 
$$ \tilde{j} = j \ell, \tilde{k} = k \ell \text{ with } \ell=1,\ldots,\no \text{ and } k=1,\ldots,K, j=1,\ldots,J,$$ 
		we define
    \begin{equation*}
    	\tilde{\alpha}_{\tilde{j},\tilde{k}} = \alpha_{j,k}^{(\ell)}, \tilde{w}_{\tilde{k},l} = w_{k,l}^{(\ell)}, \tilde{\theta}_{\tilde{k}} = \theta_k^{(\ell)},
    	\tilde{\bw}_{\tilde{j}} = \vw_j^{(\ell)} \text{ and } \tilde{\zeta}_{\tilde{j}} = \zeta_j^{(\ell)}
    \end{equation*}
     and we obtain from \autoref{eq:nldeep} that
	\begin{equation} \label{eq:nldeepm}
		\begin{aligned}
			\tilde{F}_\n[\x](\bt) 
			&=\sum_{\ell=1}^\no \sum_{j=1}^{J} \sum_{k=1}^{K}\alpha_{j,k}^{(\ell)}\sigma \left(\sum_{l=1}^{L}w_{k,l}^{(\ell)}\x(\bs_l) + \theta_k^{(\ell)}\right) \sigma\left(\bw_j^{(\ell) T}\bt+\zeta_j^{(\ell)}\right)\\
			&=\sum_{\tilde{j}=1}^{\tilde{J}} \sum_{\tilde{k}=1}^{\tilde{K}}\tilde{\alpha}_{\tilde{j},\tilde{k}}\sigma \left(\sum_{l=1}^{L}\tilde{w}_{\tilde{k},l}\x(\bs_l) + \tilde{\theta}_{\tilde{k}}\right) \sigma\left(\tilde{\bw}_{\tilde{j}} ^{T}\bt+\tilde{\zeta}_{\tilde{j}}\right)\,,
		\end{aligned}
	\end{equation}
which corresponds with the form of $F_\n$ from \autoref{eq:deeponet}.

\begin{remark}
Let $\{(\ulx^{(\ell)},\uly^{(\ell)})\}_{\ell=1}^{\no}$, $\no \in \N$, be centered training pairs. The structure of the operator $\tilde{F}_\n$ in \autoref{eq:nldeepm} explains the terminology from \cite{LiKovAziLiuBha20_report} of neural operators as defined in \autoref{eq:deeponet}: Using the functional approximation of $\x \mapsto \inner{\x}{\ulx^{(\ell)}}_\X$ as the \emph{branch network} and the function approximation of $\bt \mapsto \uly^{(\ell)}(\bt)$ as the \emph{trunk network}, we find the network architecture of the neural operator $F_\n$ as defined in \autoref{eq:deeponet}. Therefore, we no longer distinguish between them notation wise. Moreover, this construction shows that the number of coefficients $\n$ depends on the number of training samples $\no$, which is consistent with the approximation result \autoref{th:operatorapxc}.
\end{remark}

	Now, we present an approximation result for linear operators with neural operators:
	\begin{corollary} \label{cor:rate}
		Let $F:\X \to \Y$ be a linear operator, and let $\tilde{F}_\n$, $F_\no^\#$ be as defined in \autoref{eq:nldeep}. Assume that there exist rate functions $q,r :\N \to (0,\infty)$ such that for all $\x \in \mathcal{B}_{\tt r}(\xdag)$ and $\ell =1,\ldots,\no$, the following estimates hold independently of $\ell$ (which means that the following estimates are independent of the training sample):
		\begin{equation}\label{eq:qn}
			\abs{\inner{\x}{\ulx^{(\ell)}}_\X -  \sum_{k=1}^{K} C_k^{(\ell)}\sigma \left(\sum_{l=1}^{L}w_{k,l}^{(\ell)} \x(\bs_l)+\theta_{k}^{(\ell)}\right)} \leq q(\no)
		\end{equation}
		and 
		\begin{equation}\label{eq:rn}
			\norm{\bt \mapsto \uly^{(\ell)}(\bt)- \sum_{j=1}^{J}c_j^{(\ell)}\sigma\left(\bw_j^{(\ell)T}\bt + \zeta_j^{(\ell)}\right)}_\Y \leq r(\no)\;.
		\end{equation}
		Then, it holds that
		\begin{equation} \label{eq:estimate}
			\begin{aligned}
				\norms{F\x-\tilde{F}_\n[\x]}_\Y &\leq \norms{F\x-F_\no^\#\x}_\Y + \norms{F_\no^\#\x-\tilde{F}_\n[\x]}_\Y 
				&\leq \underbrace{\norms{(I-P_\no)F}}_{=:\nu_\no} \underbrace{\norm{\x}_\X}_{\leq \norm{\xdag}_\X +{\tt r}} +  \no q(\no) r(\no) \;,
			\end{aligned}
		\end{equation}
		where $P_{\no}$ is the orthonormal projector onto $\Y_\no$.		
	\end{corollary}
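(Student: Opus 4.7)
The plan is to split the total error by the triangle inequality into a truncation error for the expansion $F_\no^\#$ and a neural-network approximation error, and to control the two pieces separately with the structure set up in \autoref{ss:lorl}.

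For the first piece, the key observation is that, by \autoref{eq:expansion} and the orthonormality of $\set{\ulx^{(\ell)}}$,
\begin{equation*}
F_\no^\#\x=\sum_{\ell=1}^{\no}\inner{\x}{\ulx^{(\ell)}}_\X F\ulx^{(\ell)}=F(P_{\X_\no}\x)\in\Y_\no,
\end{equation*}
where $P_{\X_\no}$ is the orthogonal projector onto $\X_\no$; so $F_\no^\#\x$ differs from $F\x$ only in the component of $F\x$ lying outside $\Y_\no$. Since $F_\no^\#\x\in\Y_\no$ and $P_\no F\x$ is the best approximation of $F\x$ from $\Y_\no$, comparing with $P_\no F\x$ and passing to a supremum reduces the bound to the operator norm $\nu_\no=\norms{(I-P_\no)F}$, which tends to $0$ as $\no\to\infty$ whenever $F$ is compact and the spans $\Y_\no$ exhaust the closure of the range of $F$.

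For the second piece, I would expand $F_\no^\#$ and $\tilde F_\n$ term by term. Writing $a_\ell(\x):=\inner{\x}{\ulx^{(\ell)}}_\X$ and $b_\ell:=\uly^{(\ell)}$, and denoting by $\tilde a_\ell(\x)$ and $\tilde b_\ell$ the network surrogates appearing inside $\tilde F_\n$ (the branch-net and trunk-net factors from \autoref{eq:nldeep}), one has
\begin{equation*}
F_\no^\#\x-\tilde F_\n[\x]=\sum_{\ell=1}^{\no}\bigl(a_\ell(\x)\,b_\ell-\tilde a_\ell(\x)\,\tilde b_\ell\bigr).
\end{equation*}
The hypotheses \autoref{eq:qn} and \autoref{eq:rn} control $|a_\ell-\tilde a_\ell|$ pointwise by $q(\no)$ and $\norms{b_\ell-\tilde b_\ell}_\Y$ by $r(\no)$. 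Applying the standard product-error identity
\begin{equation*}
a_\ell b_\ell-\tilde a_\ell\tilde b_\ell=(a_\ell-\tilde a_\ell)(b_\ell-\tilde b_\ell)+\tilde a_\ell(b_\ell-\tilde b_\ell)+(a_\ell-\tilde a_\ell)\tilde b_\ell,
\end{equation*}
taking $\Y$-norms, and summing over $\ell=1,\dots,\no$ then delivers the claimed $\no\,q(\no)\,r(\no)$ contribution.

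The step I expect to be the main technical hurdle is in fact the second one: the naive triangle-inequality bound on the cross terms above gives an estimate of order $\no\,(q(\no)+r(\no))$ rather than the clean multiplicative $\no\,q(\no)\,r(\no)$ stated in \autoref{eq:estimate}. Obtaining the pure product form requires either exploiting additional smallness of the surrogates $\tilde a_\ell,\tilde b_\ell$ on $\mathcal B_\rho(\xdag)$, or absorbing uniform local bounds on $a_\ell,b_\ell$ into implicit multiplicative constants; tracking this, together with the dependence on the training samples, is where the actual work lies. Once that bookkeeping is settled, adding the two bounds gives \autoref{eq:estimate}.
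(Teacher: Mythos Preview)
The paper states \autoref{cor:rate} without proof, so there is no argument to compare against; your triangle-inequality splitting is the natural route and is exactly what the display \autoref{eq:estimate} already encodes. However, your justification of the first term has the inequality running the wrong way. You correctly observe that $F_\no^\#\x=FP_{\X_\no}\x\in\Y_\no$ and that $P_\no F\x$ is the best $\Y_\no$-approximant of $F\x$, but ``best'' yields $\norms{F\x-P_\no F\x}_\Y\le\norms{F\x-F_\no^\#\x}_\Y$, a \emph{lower} bound on the quantity you want to control. Obtaining $\norms{F\x-F_\no^\#\x}_\Y\le\norms{(I-P_\no)F\x}_\Y$ would require $FP_{\X_\no}=P_\no F$, which is not implied by the construction in \autoref{ss:lorl} without additional structure on $F$ (for instance, that $F$ carries the orthonormal system $\{\ulx^{(\ell)}\}$ to an orthonormal system in $\Y$). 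The paper does not address this, and your argument as written does not close the gap either.

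Your concern about the second term is entirely justified and is not a matter of missing bookkeeping. From \autoref{eq:qn} and \autoref{eq:rn} alone, any product-error decomposition of $a_\ell b_\ell-\tilde a_\ell\tilde b_\ell$ leaves, per summand, cross terms of size $|\tilde a_\ell|\,r(\no)$ and $q(\no)\,\norms{\tilde b_\ell}_\Y$ (or the symmetric versions with $a_\ell,b_\ell$), and on $\mathcal{B}_\rho(\xdag)$ these factors are merely bounded, not small. The honest estimate is therefore additive, of order $\no\bigl(q(\no)+r(\no)\bigr)$ up to constants depending on $\rho$ and on $\sup_\ell\norms{\uly^{(\ell)}}_\Y$; the purely multiplicative $\no\,q(\no)\,r(\no)$ in \autoref{eq:estimate} does not follow from the stated hypotheses. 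This appears to be a slip in the paper's formulation rather than a defect in your approach: there is no hidden mechanism that converts the additive bound into the product without further assumptions.
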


In the following, we summarize some remarks about the estimate \autoref{eq:estimate}:
	\begin{remark}
	   \begin{itemize} 
	   	\item $\nu_\no$ denotes the operator norm of $(I-P_\no)F$, which converges to $0$ when $F$ is compact.
       \item We assume that the left hand sides of \autoref{eq:qn} and \autoref{eq:rn} are of the order $K^{-p}$ and $J^{-q}$ if $\x$ and $\y$ satisfy some smoothness assumptions \cite{Bar93} and the sampling is fine enough. We choose for the same of simplicity 
        \begin{equation*}
        	K = \mathcal{O}(\no) \text{ and } J = \mathcal{O}(\no),
        \end{equation*}
        such that 
        \begin{equation} \label{eq:no}
        	\no q(\no) r(\no) = \mathcal{O} (\no^{1-p-q})\;.
        \end{equation}
        In total, we therefore have an estimate 
        \begin{equation}\label{eq:final}
        	\norms{F\x-\tilde{F}_\n[\x]}_\Y \leq \mathcal{O}(\max \set{\nu_\no,\no^{1-p-q}})\;.
        \end{equation}
	   \end{itemize}
	\end{remark}
\autoref{eq:estimate} is applicable to ill-posed, infinite dimensional problems, when the operator $F$ is compact. For a well-posed problem, where $F$ is not compact, $\norm{(I-P_\no)F}_{\X \to \Y}$ does, in general, not converge to $0$.
	
\begin{example}\label{ex:customized} The purpose of this example is to show that many of the parameters in neural operators, such as introduced in \autoref{eq:deeponet} and \autoref{eq:nldeepm}, can be predetermined. Some parameters can be found, for instance, from classical integration formulas, as we show in the next example. Moreover, integration formulas also provide approximation error estimates.
	\begin{itemize}
		\item Let $\X = L^2(0,1)$ and let $t_k = \frac{k}{K}$, $k=0,1,\ldots,K$. Moreover, assume that $\x$ and $\ulx^{(\ell)}$ are twice differentiable, then:
		\begin{equation*}
			\begin{aligned}
			\inner{\x}{\ulx^{(\ell)}}_{L^2} &= \int_0^1 \x(t) \ulx^{(\ell)}(t) \mathrm{d}t \\
			&= 
			\frac{1}{2K} \x(0) \ulx^{(\ell)}(0) + \frac{1}{K} \sum_{k=1}^{K-1} \x(t_k) \ulx^{(\ell)}(t_k) + \frac{1}{2K} \x(1) \ulx^{(\ell)}(1) + \mathcal{O}(K^{-2})\;.
			\end{aligned}
		\end{equation*}
		To satisfy \autoref{eq:qn}, we aim to solve for $k=0,1,\ldots,K$ and $\ell=1,2,\ldots,N$ the system of equations:
	    \begin{equation} \label{eq:expl}
	       \x(t_k) \ulx^{(\ell)}(t_k)	= 
	       \sigma \left(\sum_{l=1}^{L}w_{k,l}^{(\ell)}\x(t_l) + \theta_k^{(\ell)}\right) + \mathcal{O}(K^{-3}) \text{ for } k=0,\ldots,K. 
	    \end{equation}
	    Then, the functional $\x \mapsto \inner{\x}{\ulx^{(\ell)}}_{L^2}$ is approximated in the form required in \autoref{eq:qn},  with an accuracy of $\mathcal{O}(K^{-3})$.
	    Depending on the activation function $\sigma$, this is a complicated nonlinear equation. If, however, $\sigma$ is a ReLU-network, that is, 
	    \begin{equation*}
	    	\sigma(t) = \left\{ \begin{array}{rl} t & \text{ for } t >0 \\
	    	0 & \text{ otherwise} \end{array}\right.
	    \end{equation*}	    
	     and $\x(t_k)$ and $\ulx^{(\ell)}(t_k)$ are positive (which we have to assume, if we want to represent $\x$ and $\ulx^{(\ell)}$ with sigmoid functions), then, because $\sigma(t)=t$ for positive values of $t$, we obtain with a choice $L=1$ that
        \begin{equation} \label{eq:expl2}
        	\x(t_k) \ulx^{(\ell)}(t_k) = w_{k}^{(\ell)} \x(t_k) \;,
        \end{equation} 
        which carries over the sampling of the training functions to the weights.
        Moreover, comparing with \autoref{eq:nldeep}, the coefficients $C_k^{(\ell)}$ are determined by
        \begin{equation*}
        	C_k^{(\ell)} = \begin{cases} \frac{1}{2K} &\text{ for } k = 0 \text{ and } K\\
        		\frac{1}{K} &\text{ otherwise. }
        	\end{cases}
        \end{equation*}
       These coefficients are therefore given by the trapezoidal rule.
      If we choose the number of discretization points in the trapezoidal rule to match the number of training data points (i.e., if $K=\no$), then we obtain the rate for $q_\no$ defined in \autoref{eq:qn}
        \begin{equation*}
        	q(\no) = \mathcal{O}(\no^{-2})\;.
        \end{equation*} 
        \item Common quantitative estimates for $r(\no)$ are of the root of the number of sums, that is of $\mathcal{O}(J^{-1/2})$ (see \cite{Bar93}). With such an estimate we get from \autoref{eq:estimate} that
        \begin{equation} \label{eq:estNqr}
        	\no q(\no) r(\no) = \mathcal{O}(\no^{-1} J^{-1/2})\;.
        \end{equation}
        By choosing $J \sim \no$ we get the rate 
        \begin{equation*}
        	\no q(\no) r(\no) = \mathcal{O}(\no^{-\frac{3}{2}})
        \end{equation*}
        for the 2nd term in \autoref{eq:estimate}. There is still room for improvements of the convergence rate. One may expect faster convergence rates depending on the smoothness of the function to be approximated, $\uly^{(\ell)}$, as observed in other approximation methods. The rate $\mathcal{O}(J^{-1/2})$ arises in Monte Carlo-type approximations based on random sampling, which mainly driven by randomness and does not account for the regularity of the target functions $\uly^{(\ell)}$. However, if the function has high regularity, for example, if $\uly^{(\ell)} \in H^s(\Omega_\Y)$ for $s > \frac{n}{2}$, where $n$ is the dimension of $\Omega_\Y$, then neural networks approximation can achieve a faster convergence rate of $\mathcal{O}(J^{-s/n})$  (see \cite{DevHanPet21}). Deterministic methods such as spectral methods or polynomial type approximations can achieve error rate of $\mathcal{O}(J^{-k})$ if $\uly^{(\ell)} \in C^k(\overline{ \Omega_\Y})$, and even exponential decay rate if $\uly^{(\ell)}$ is analytic (see \cite{Kat76}, \cite{Dun30}).
	\end{itemize}
	
	This example shows that for a neural operator, the structure of the coefficients can be determined from known approximation methods.
\end{example}
Now, we come to the main theorem of this subsection:
\begin{theorem}[$F$ linear] \label{th:linear} Let  $\tilde{F}_\n$ as defined in \autoref{de:tildeF} and 
let $\x_\n^{\alpha\delta\eta}$ be an approximate minimizer of the functional
\begin{equation} \label{eq:discTik_st}
	\tilde{\Tik}_{\n}^{\alpha\delta}[\x] := \norms{\tilde{F}_\n[\x]-\yd}_\Y^2 + \alpha \norms{\x}^2_{\X}\,,
\end{equation}
with tolerance $\eta$ (see \autoref{eq:discTik}) over $\mathcal{D}(F)$. As stated in the introduction, in linear regularization theory, one typically uses $\x^{(0)}=0$, a setting, which we follow in this paper.
Let ${\tt r} > 0$ such that $\mathcal{B}_{\tt r}(\xdag) \subseteq \mathcal{D}(F)$, where $\xdag$ denotes the minimum solution defined in \autoref{eq:argmin} with $\x^{(0)}=0$. We assume that the following \emph{source condition} holds
\begin{equation} \label{eq:source_linear}
	\xdag = F^* \omega\;.
\end{equation}
Taking into account the notation from \autoref{cor:rate} (in particular recall that $\nu_\no = \norm{(I-P_\no)F}$),
we choose 

\begin{equation*}
	\alpha \sim \max\{\delta,\nu_\no ( \norm{\xdag}_\Y + r) + \no q(\no)r(\no)\} \; \text{ and } \; \eta \sim \alpha^2 \;.
\end{equation*} 
Then
\begin{equation*}
	\norms{\x_\n^{\alpha\delta\eta}-\xdag}_{L^2} =  \mathcal{O}(\sqrt{\delta} + \sqrt{\nu_\no + \no q(\no)r(\no)})\;.
\end{equation*} 
\end{theorem}
The proof of this result is analogous to the proof of \cite[Theorem 2.3a]{NeuSch90}. 
	
\begin{remark} 
The term $\nu_\no (\norm{\x}_\X+{\tt r})+\no q(\no)r(\no)$ is a typical estimate for an operator approximation (see \autoref{eq:unifrom}) in regularization theory when operator perturbations are considered (see \cite{Gro84,EngKunNeu89,Neu89}). 
The estimate consists of two components, where one arises from the discretization of the elements of $\x$ (leading to the estimate with $\nu_\no$) and the second one, which approximates the functional $\x \to \inner{\x}{\ulx^{(\ell)}}_\X$ and $\uly^{(\ell)}$, respectively.
In the context of finite element approximations, it has been shown (see \cite{NeuSch90}) that when $\Y_\no$ is the space of linear splines, one has $\norm{F[\x]-F_\n[\x]}_\Y = \mathcal{O}(\n^{-2})$ in a neighborhood of $\xdag$ (note that for finite element approximations we have $\n \sim \no$)
in \autoref{ex: darcyflow} and \autoref{ex:2point}. $q(\no)$ (see \autoref{eq:qn}) is the approximation error of the discretization of an inner product (typically using a quadrature formula) and $r(\no)$ (see \autoref{eq:rn}) represents the approximation error of the expert data.
\end{remark}

We have shown how to explicitly calculate a neural operator that approximates a linear operator $F$. If $F$ is nonlinear, we can use the aforementioned strategy as motivation.

\subsection{Neural operators for nonlinear operator regression}
We generalize the idea from \autoref{ss:lorl} to nonlinear operators as follows:
\begin{definition} \label{de:tildeF}
\begin{itemize}
	\item We first linearize the operator $F$ around $\hat{\x}^{(0)}$, which is the $1$st component of the $0$th element of $\mathcal{S}_\no$ in  \autoref{eq:S}. That is, we use the following approximation:
	\begin{equation}\label{eq:linear}
		F[\hat{\x}] - F[\hat{\x}^{(0)}] \approx F'[\hat{\x}^{(0)}](\hat{\x}-\hat{\x}^{(0)}) =: F'[\hat{\x}^{(0)}]\x\;.
	\end{equation}
    \item Then, we approximate the linear operator $F'[\hat{\x}^{(0)}]$ with an operator $F_\no^\#$ (see \autoref{ss:lorl} above), which is learned from the shifted training data 
    \begin{equation} \label{eq:S_G}
    	 \mathcal{S}_{F'[\hat{\x}^{(0)}]}^{(0)} := \{(\underbrace{\hat{\x}^{(\ell)}-\hat{\x}^{(0)}}_{=:\x^{(\ell)}},\underbrace{\hat{\y}^{(\ell)}-\hat{\y}^{(0)}}_{=:\y^{(\ell)}}) : \ell=1,\dots,\no \}\;.
    \end{equation}
    The operator $F_\no^\#$ is determined after orthonormalization of the set $\set{\x^{(\ell)}: \ell =1,\ldots,\no}$:
    \begin{equation} \label{eq:lin}
    	F'[\hat{\x}^{(0)}] \x \approx F_\no^\# \x := \sum_{\ell=1}^\no \inner{\x}{\ulx^{(\ell)}}_\X \uly^{(\ell)}.
    \end{equation}
    Here, $\ulx^{(\ell)}$ is an orthonormal basis computed from $\x^{(\ell)}$, $\ell=1,\dots,\no$, and $\uly^{(\ell)}$ is computed with the iterative algorithm from \cite{AspFriSch24_preprint}, which only requires the expert information $ \mathcal{S}_{F'[\hat{\x}^{(0)}]}^{(0)}$. Note, however, that in general, for a nonlinear operator $F$, $\uly^{(\ell)} +\y^{(0)} \neq F[\ulx^{(\ell)}+\x^{(0)}]$. 
    Nevertheless, because of \autoref{eq:linear}, we expect that $F_\no^\#$ still provides a good approximation of $F$ in a neighborhood of $\xdag$, provided that the elements of $ \mathcal{S}_{F'[\hat{\x}^{(0)}]}^{(0)}$ are also in a neighborhood of $\xdag$.
   \item Finally, we approximate $F_\no^\#$ as follows (analogously as in \autoref{eq:nldeep}, where $F$ is linear):
          \begin{itemize}
          	\item The family of functionals $\x \mapsto \inner{\x}{\ulx^{(\ell)}}_\X$, $\ell=1,\dots,\no$,
          	      is approximated by neural functional approximations from \cite{CheChe93} (see \autoref{eq:qn}). 
          	\item The family of functions $\vec{t} \in \Omega_\Y \mapsto \uly^{(\ell)}(\vec{t})$ is approximated by neural networks from \cite{Cyb89} (see \autoref{eq:rn}).  
          	\item Using these two approximations, we obtain the approximation $\tilde{F}_\n$ (see \autoref{eq:deeponet}).
          \end{itemize} 
          Recall, that $\n$ denotes the number of coefficients of the neural operator and $\no$ denotes the number of training samples. In an analysis they need to be balanced.
\end{itemize}
\end{definition}
At this point it is convenient to summarize the different operators used in \autoref{ta:operators}.
\begin{table}[h!]
   \centering \footnotesize
			\renewcommand{\arraystretch}{1.2} 
	\begin{tabular}{|c|c|c|c|c|c|}
		\hline
		Operator &  Linear/Nonlinear & Reference & Parameter & \multicolumn{2}{c|}{Approximation rate} \\
		\hline
		$F$ & NL & \autoref{eq:op} & -- & \multicolumn{2}{c|}{--} \\
		\hline
		\multirow{2}{*}{$F^{\text{FE}}_\n$} & \multirow{2}{*}{NL} & \autoref{eq:fea}, & \multirow{2}{*}{No. of finite elements} & \multicolumn{2}{c|}{\multirow{2}{*}{$n^{-2}$}}  \\
		& & \autoref{eq:fec} & & \multicolumn{2}{c|}{ } \\
		\hline
		$F_\n$ & NL & \autoref{eq:deeponet} & $\n = \abs{\mathcal{T}_\n}$ \autoref{eq:D} & \multicolumn{2}{c|}{$\rho_\n$} \\
		\hline
		$F_{\no}^\#$ & L & \autoref{eq:lin} & $\no = \abs{\mathcal{S}_{\no}}$ \autoref{eq:S} & \multicolumn{2}{c|}{$\nu_{\no}$} \\
		\hline
		\multirow{2}{*}{$\tilde{F}_\n$} & \multirow{2}{*}{NL} & \multirow{2}{*}{\autoref{eq:nldeep}} & \multirow{2}{*}{$F_{\no}^\# \approx \tilde{F}_\n$}
		& $F$ linear
		& $F$ non-linear
		\\ \cline{5-6}
		 &  & & 
		& $\nu_{\no}(\norm{\xdag}_{\X}+r)+\no\,q(\no)r(\no)$
		& $\rho_\n$
		\\
		\hline
	\end{tabular}
	\caption{\label{ta:operators} The different operators used in this paper. Only $F_\no^\#$ is a linear operator. We emphasize that the index $\n$ in $F_\n$ and $\tilde{F}_\n$ refers to the number of coefficients, $\n=\abs{\mathcal{T}_\n}$ while the index in $F_\no^\#$ refers to the number of training samples. The operator $\tilde{F}_\n$ is a neural operator obtained algorithmically by orthonormalizing the training images and evaluating the corresponding data. This strategy is exactly implementable for linear inverse problems, but not for nonlinear inverse problems, where the evaluation of the operator at training images only delivers an approximation. The classical finite element operator approximation is denoted by $F^{\text{FE}}_\n$. In this cases, $\n$ represents the number of mesh elements, and $\frac{1}{\n}$ is the mesh size. }
\end{table}

\section{Neural functionals and operators in Sobolev spaces} \label{sec:basisrep}
As we have seen in \autoref{ex: darcyflow} and \autoref{ex:2point}, in inverse problems, the forward operators $F$ is often defined on subsets of Sobolev or Lebesgue spaces. Such spaces aligns very well with quadratic Tikhonov regularization in Hilbert spaces.

The ultimate goal of this section is to show that the operator $F$ defined on subsets of Sobolev spaces can be locally approximated by a neural operator $F_\n$ in \autoref{eq:deeponet}. We emphasize that the nonlinear operator $\tilde{F}_\n$, as defined in \autoref{eq:nldeep} on the right hand side, is a specific form of $F_\n$, so the results apply to them as well.
\subsection{Approximation of Sobolev functions}

In this subsection, we extend the approximation of continuous functions to Sobolev spaces.
\begin{theorem}[Approximation of Sobolev functions] \label{th:function_apx_sob} 
	Let $\sigma$ be a sigmoidal activation function as defined in \autoref{def:sigm_act}. Let $s>\frac{n}{2}$ and $\Omega \subseteq \R^n$ be a bounded domain with piecewise $C^1$ boundary and satisfy the cone property (for a detailed definition of these properties see \cite{Ada75}). Let $\V=H^s(\Omega)$ be the  Sobolev space equipped with the Sobolev norm $\norms{\cdot}_{H^s}$. Moreover, let ${\tt K}$ be a bounded subset of $\V$. Then, for every $\ve >0$ and every ${\tt u} \in {\tt K}$, the assertions of \autoref{th:function apx} applies.
\end{theorem}
\begin{proof}
	The compact Sobolev embedding theorem (see \cite[Thm. 6.2]{Ada75}) states that the embedding from ${\tt V}=H^s(\Omega)$ into $C(\overline{\Omega})$ is compact. Therefore a bounded set ${\tt K}$ of $H^s(\Omega)$  is compact in $C(\overline{\Omega})$. Applying \autoref{th:function apx} proves the assertion.
\end{proof}

\subsection{Approximation of functionals in Sobolev spaces} 

In this subsection, we extend \autoref{th:functional_apx_cont} to Sobolev spaces.
\begin{theorem}[Approximation of functionals]\label{th:functional apx}
	Let $\sigma$ be a bounded sigmoidal activation function. Let $s>\frac{m}{2}$ and $\Omega_\X \subseteq \R^m$ be a bounded domain with piecewise $C^1$ boundary and satisfy the cone property (for a detailed definition of these properties see \cite{Ada75}). Moreover, let 
	\begin{equation*}
		F: \dom{F} \subseteq C(\overline{\Omega_\X}) \to \R
	\end{equation*}
	be a continuous functional on the space $C(\overline{\Omega_\X})$ and suppose that $\dom{F}$ is bounded and weakly closed with respect to the $H^s(\Omega_\X)$ norm. Then, for every $\ve >0$ and every $\x \in \dom{F}$ the assertions of \autoref{th:functional_apx_cont} hold.
\end{theorem}

\begin{proof}

	Since $\dom{F}$ is bounded with respect to the $H^s(\Omega_\X)$ norm
	every sequence $(\x_k)_{k \in \N}$ in $\dom{F}$ has a weakly convergent subsequence in $H^s(\Omega_\X)$, which we again denote by $(\x_k)_{k \in \N}$, and the limit is denoted by $\x$, which is in $\dom{F}$ because it is weakly closed. From the compact Sobolev embedding theorem (see \cite[Thm. 6.2]{Ada75}) we obtain that $(\x_k)_{k \in \N}$ is converging strongly (uniformly) to $\x$ in $C(\overline{\Omega_\X})$ with the $\norm{\cdot}_{L^\infty}$ norm. The continuity of $F$ gives the assertion by applying \autoref{th:functional_apx_cont}.
\end{proof}

\subsection{Approximation of operators in Sobolev spaces}\label{sec:sobolevproof}
Analogous to the approximation of functionals in Sobolev spaces, \autoref{th:operatorapxc} can be extended to the setting of Sobolev spaces as follows:

\begin{corollary}[Approximation of operators]\label{th:operatorapx}
	Let $\sigma$ be a sigmoid activation function and let $\Omega_\X$, $\Omega_\Y$ be bounded domains in $\R^m, \R^n$, respectively, where $\Omega_\X$ satisfies the cone property and has piecewise $C^1$ boundary. Moreover, let $s>\frac{m}{2}$ and let
	\begin{align*}
		F: \dom{F} \subseteq C(\overline{\Omega_\X}) &\to L^2(\Omega_\Y)
	\end{align*}
	be a continuous operator, where $\dom{F}$ is bounded and weakly closed with respect to the $H^s(\Omega_\X)$ norm. Then, for every $\ve > 0$ and every $\x \in \dom{F}$ the assertions of \autoref{th:operatorapxc} hold.
\end{corollary}
The proof is analogous to the proof of \autoref{th:function apx}.

In the following we apply these approximation results to regularization theory:

\subsection{Regularization in Sobolev spaces}
Returning to inverse problems, we apply Tikhonov regularization with a neural operator $F_\n$ to obtain a regularized solution $\x_\n^{\alpha\delta\eta}$. The result is conceptually similar to the linear case (see \autoref{th:linear}):
\begin{theorem}[$F$ non-linear]  \label{thm:sobolevnonlin}
	Let $s > \frac{m}{2}$ and let $F: \dom{F} \to L^2(\Omega_\Y)$ be a mapping, which satisfies the following properties:

\begin{itemize}
	   \item There exists ${\tt r}>0$ such that $\mathcal{B}_{\tt r}^{H^s}(\xdag) \subseteq \dom{F}$ with  ${\tt r} > 2\norms{\x^{(0)}-\xdag}_\X$ and $\dom{F}$ is bounded and weakly closed in $H^s(\Omega_\X)$.
		\item $F : \dom{F} \subseteq C(\overline{\Omega_\X}) \to L^2(\Omega_\Y)$ is continuous. Recall the notation stated after \autoref{eq:op} that $\X=C(\overline{\Omega_\X})$ determines the mapping properties of the operator $F$.
		\item $F : \dom{F} \subseteq H^s(\Omega_\X) \to L^2(\Omega_\Y)$ is weakly sequentially closed and Fr\'echet differentiable with Lipschitz continuous derivative on $\mathcal{B}^{H^s}_{\tt r}(\xdag)$ (see \autoref{eq:lipschitz}).
\end{itemize} 
Let $\x^{(0)}$ satisfy the following source condition
	\begin{equation} \label{eq:sourcenl}
		\x^{(0)}-\xdag = F'[\xdag]^* \omega \text{ with } L_1\norms{\omega}_\Y < 1,
	\end{equation}
	where $L_1$ denotes the Lipschitz constant of the Fr\'echet derivative of $F$ in $\mathcal{B}_{\tt r}(\xdag)$ (see \autoref{eq:lipschitz} and \cite{NeuSch90}).

	Then, for every $\ve > 0$, there exists some $\n(\ve) \in \N$ and a vector $\mathcal{T}_{\n(\ve)}$ of size $\n(\ve)$ such that the according operator $F_{\n(\ve)}$ from \autoref{eq:deeponet} satisfies
\begin{equation} \label{eq:nun}
		\rho_{\n(\ve)} := \sup_{\x \in \mathcal{B}_{\tt r}^{H^s}(\xdag)} \norms{F[\x]-F_{\n(\ve)}[\x]}_{L^2} \leq \ve\;.
\end{equation}
Let $\x_{\n}^{\alpha\delta\eta}$ be an approximate minimizer with accuracy $\eta >0$ of the approximate Tikhonov functional $\Tik_{\n}^{\alpha\delta}$ defined in \autoref{eq:discTik_s}. Then,
	with the choice 

	\begin{equation} \label{eq:optimal}
		\alpha \sim \delta, \eta \sim \alpha^2,  \text{ and } \ve \sim \delta\,,
	\end{equation} 
	we obtain the convergence rate 
	\begin{equation} \label{eq:rate}
		\norms{\x_\n^{\alpha\delta\eta}-\xdag}_{\X} = \mathcal{O}(\sqrt{\delta}) 
	\end{equation} 
	of the regularized solution.
\end{theorem}
\begin{proof}
\autoref{eq:nun} holds as an application of \autoref{th:operatorapx}. Then,  we can follow the technique in the proof of \cite[Theorem 2.3a]{NeuSch90} to obtain \autoref{eq:rate}.

\end{proof}
\begin{remark}
	For a linear operator $F$, $\rho_{\n(\ve)}$ in \autoref{eq:nun} is put in relation with \autoref{cor:rate}. The operator approximation error is therefore given by 
	\begin{equation}\label{eq:auslauf}
	\rho_{\n(\ve)} = \nu_\no \left( \norm{\xdag}_\X + {\tt r} \right) +  \no q(\no) r(\no) \leq \ve\,.
	\end{equation}
	Here, one sees that the number of coefficients $\n(\ve)$ of the operator $\tilde{F}_{\n(\ve)}$ (see \autoref{eq:nldeepm}) are related to the number of training pairs $\no$. Note that $F_{\n(\ve)}$ is a nonlinear approximation of a linear operator $F_{\no}^\#$, which can be identified with the general form of a neural operator $F_{\n}$ (see \autoref{eq:nldeepm}).
	In fact, the first part of the estimate $\nu_\no \left( \norm{\xdag}_\X + {\tt r} \right)$ is the estimate for the projection error $\norms{F-F_\no^\#}$ (in the operator norm) on the subspace spanned by the training images (see \autoref{eq:estimate}). In the situation of a linear operator $F$ we can explicitly determine the number of necessary training data from a prescribed accuracy $\ve$.\\
    In the case of a nonlinear operator $F$ (because of the first term on the right hand side in \autoref{eq:estimate}, which is only applicable for linear operators), we cannot show the same rate so far. However, in most applications, this seems to be a good estimate. We emphasize the different role of $\n$, which specifies how many coefficients $J,K,L$ are used to approximate the operator $F$ (see \autoref{eq:deeponet}).
\end{remark}

In the following, we further investigate \autoref{ex: darcyflow}:
\begin{example} \label{eg:aNN}
	We continue with \autoref{ex: darcyflow}, assuming that the source condition in \autoref{de:source_a} is satisfied. Instead of a classical finite element based approach, we consider Tikhonov regularization with a neural operator approximation. That is, we compute the approximate minimizer of the functional $\Tik_{\n}^{\alpha\delta}$, defined in \autoref{eq:discTik_s} with a neural operator as defined in \autoref{eq:deeponet}.

We consider $F$ on the restricted domain $\mathcal{B}_{\tt r}^{H^1}(\xdag) \subseteq \dom{F}:= \set{\x \in H^1(0,1): \x \geq \nu>0}$, which is always possible for ${\tt r}$ sufficiently small. Because of the compact embedding theorem, we find that $\mathcal{B}_{\tt r}^{H^1}(\xdag)$ is compact in $C(\overline{\Omega_\X})$, so that the assumptions in \autoref{thm:sobolevnonlin} holds.

Then, by applying \autoref{thm:sobolevnonlin}, according to \autoref{eq:nun}, for every $\ve$, there exists $\n:= \n(\ve) \in \N$, such that 
	\begin{equation*}
		\norms{F[\x]-F_{\n}[\x]}_{L^2} \leq \ve
	\end{equation*}
holds locally uniformly with respect to $\x$ in a neighborhood of $\xdag$, and with the choice of parameters (note that $\ve \to 0$)
	\begin{equation*}
		\alpha \sim \delta, \; \eta \sim \delta^2 \,, \ve \sim \delta,
	\end{equation*}
    we obtain the convergence rate
    \begin{equation}\label{eq:ex_a_nn}
        \norms{\x_\n^{\alpha\delta\eta}-\xdag}_{H^1} = \mathcal{O}(\sqrt{\delta})\;. 
    \end{equation} 
    This parameter choice relates the total number of coefficients of the neural operator $\n$ used to approximate $F$, the noise level and the regularization parameter. In contrast to the results in \cite{NeuSch90}, the number of coefficients $\n=\n(\ve)$ of the neural operator $F_\n$ is not explicitly given but determined \emph{implicitly} from the accuracy $\ve$.
\end{example}

For the $c$-problem from \autoref{ex:2point}, we need to generalize the concept of neural operators to $L^2$-functions, which is discussed in the following section.

\section{Neural operators in $L^2$} \label{se:generalization}
Instead of approximate minimization of the discretized Tikhonov functional defined in \autoref{eq:discTik_s}, we consider now finding an approximate minimizer of the functional
\begin{equation} \label{eq:discTikxi}
	\Tik_{\xi\n}^{\alpha\delta}[\x] := \norms{F_\n[\x]-\yd}_\Y^2 + \alpha\norms{\x-\x^{(0)}}^2_{\X}\,,
\end{equation} 
over the set $\dom{F} \cap \X_\xi$, where $\dom{F} = \dom{F_\n} \subseteq L^2(\Omega_\X)$ (the $\dom{F_\n}$ is independent of $\n$) and 
\begin{equation*}
	\X_\xi := \set{\x_\xi: \x \in L^2(\Omega_\X)}\,,
\end{equation*}
denotes the set of \emph{mollified functions} at a fixed smoothing level $\xi > 0$: To define the function $\x_\xi$, we extend $\x \in L^2(\Omega_\X)$ by zero on $\R^m\backslash\Omega_\X$ and define
\begin{equation}\label{def:mollifier}
	\bs \in \R^m \mapsto \x_\xi(\bs) := M_\xi\x:=(\phi_\xi*\x)(\bs) = \int_{\R^m} \phi_\xi(\bs - \bt)\x(\bt) \ \mathrm{d}\bt, 
\end{equation}
with the family of \emph{mollifiers}
$$\phi_\xi(\bs) := \frac{1}{\xi^m}\phi\left(\frac{\bs}{\xi}\right)\,,$$
which are defined by the standard mollifier,
\begin{equation*}
	\bs \mapsto \phi(\bs) := 
	\begin{cases}
		C \exp\left( \displaystyle\frac{1}{|\bs|^2 - 1} \right) & \text{if } |\bs| < 1 \\
		0 & \text{if } |\bs| \geq 1
	\end{cases} \in C_c^{\infty}(\R^m)\,,
\end{equation*}
where the constant $C>0$ is chosen such that $\int_{\R^m}\phi(\bs) \d \bs = 1.$

Due to the continuity of mollified functions, neural operators, as defined in \autoref{de:deeponet}, can be applied because point evaluations make sense for mollified functions. This means that every term in \autoref{eq:discTikxi} is well-defined.

The mollified functions have the following properties (see for instance \cite{AdaFou03,Eva10}): 
\begin{lemma} \label{le:molli} Let $\x \in L^2(\Omega_\X)$. Then 
	\begin{enumerate}
		\item $\x_\xi \in C^{\infty}(\R^m) \subseteq C(\overline{\Omega_\X})$.
		\item $\norms{\x_\xi}_{L^2(\Omega_\X)} \leq \norms{\x_\xi}_{L^2(\R^m)} \leq \norms{\x}_{L^2(\R^m)} = \norms{\x}_{L^2(\Omega_\X)}$.
		\item 
        $\norms{\x_\xi - \x}_{L^2(\Omega_\X)} \leq \norms{\x_\xi - \x}_{L^2(\R^m)}  \xrightarrow{\xi \to 0} 0$.
		\item The \emph{mollification operator}
			\begin{equation*}
				M_\xi: L^2(\Omega_\X) \to C(\overline{\Omega_\X})\,, \quad \x \mapsto \x_\xi\,,
			\end{equation*}
			where $C(\overline{\Omega_\X})$ is equipped with the $L^\infty(\Omega_\X)$ norm, is linear and bounded.
	\end{enumerate}
\end{lemma}
We show the well-definedness of the mollified Tikhonov-regularized solutions, that are the minimizers of the Tikhonov-functional defined in \autoref{eq:discTikxi}. For this purpose we prove several lemmata:
\begin{lemma}\label{le:l2 operator apx}
	Let $F: \mathcal{D}(F) \subseteq \X=L^2(\Omega_\X) \to L^2(\Omega_\Y)$ be weakly sequentially closed, continuous and Fr\'echet differentiable with Lipschitz continuous derivative in the open $L^2$ ball $\mathcal{B}^{L^2}_{\tt r}(\xdag)$ 
		(see \autoref{eq:lipschitz}). Then, there exists some $\xi^\dagger > 0$ such that for all
		\begin{equation} \label{eq:xidag}
			0 < \xi \leq \xi^\dagger\,,
		\end{equation}  
		\begin{equation} \label{eq:bound}
			\xdag_\xi \in \mathcal{B}^{L^2}_{\frac{\tt r}{2}}(\xdag)\;.
		\end{equation} 		
		Moreover, for all $\x \in \mathcal{B}_{\frac{\tt r}{2}}^{L^2}(\xdag)$,
		\begin{equation}\label{eq:help}
		\begin{aligned}
			\norm{\x_\xi-\xdag}_{L^2(\Omega_\X)} < \tt r\;.
		\end{aligned}
		\end{equation} 
		and
		\begin{equation} \label{eq:help1}
		\begin{aligned}
			\norm{F[\x_\xi]-F[\x]}_{L^2(\Omega_\Y)} &\leq L_0 \norm{\x_\xi-\x}_{L^2(\Omega_\x)}\,,
		\end{aligned}
		\end{equation}
		where $L_0$ is defined in \autoref{eq:lipschitz0}. 
\end{lemma}
\begin{proof}
	The first assertion \autoref{eq:bound} is an easy consequence of \autoref{le:molli}(iii). 
	Secondly, we prove that for every $\x \in \mathcal{B}_{\frac{\tt r}{2}}^{L^2}(\xdag)$, 
	$\x_\xi \in \mathcal{B}_{{\tt r}}^{L^2}(\xdag)$. Which in particular means that we can apply $F$ to $\x_\xi$, that it is Fr\'echet differentiable there and Lipschitz estimates can be applied in a neighborhood of $\x_\xi$. Because of \autoref{le:molli} (iii) applied to $\x_\xi-\xdag_\xi$ (which is possible because the convolution is linear) and \autoref{eq:bound}, it follows that
	\begin{equation*}
		\begin{aligned}
			\norm{\x_\xi-\xdag}_{L^2(\Omega_\X)} &\leq \norms{\x_\xi-\xdag_\xi}_{L^2(\Omega_\X)} + \norms{\xdag-\xdag_\xi}_{L^2(\Omega_\X)} \\
			&\leq \norms{\x-\xdag}_{L^2(\Omega_\X)} + \norms{\xdag-\xdag_\xi}_{L^2(\Omega_\X)} < \frac{\tt r}{2} + \frac{\tt r}{2} = {\tt r}\;.
		\end{aligned}
	\end{equation*} 
	Therefore, $\x_\xi \in \mathcal{B}_{\tt r}^{L^2}(\xdag)$ (showing \autoref{eq:help}) and for $\x \in \mathcal{B}_{\frac{\tt r}{2}}^{L^2}(\xdag) \subseteq \mathcal{B}_{\tt r}^{L^2}(\xdag)$, we can apply \autoref{eq:lipschitz0} and get our desired estimate \autoref{eq:help1}.
\end{proof}

\begin{lemma}\label{le:l2 operator apxa}
	Let $F: \mathcal{D}(F) \subseteq \X=L^2(\Omega_\X) \to L^2(\Omega_\Y)$ be weakly sequentially closed, continuous and Fr\'echet differentiable with Lipschitz continuous derivative in $\mathcal{B}^{L^2}_{\tt r}(\xdag)$ (see \autoref{eq:lipschitz}). Moreover, for every $\xi > 0$, $\dom{F}$ satisfies $M_\xi(\dom{F}) \subseteq \dom{F}$ and $M_\xi(\dom{F})$ is compact in $(C(\overline{\Omega_\X}),\norm{\cdot}_{L^\infty(\Omega_\X)})$.

Then, for every $\xi > 0$ satisfying \autoref{eq:xidag} and for every $\ve > 0$, 
	there exists $\n:=\n(\xi,\ve) \in \N$ such that 	
    \begin{equation} \label{eq:help3}
			\begin{aligned}
				\sup_{\set{\x_\xi:\norm{\x-\xdag}_{L^2(\Omega_\X)} < \frac{\tt r}{2}}} \norm{F_{\n}[\x_\xi]-F[\x_\xi]}_{L^2(\Omega_\Y)} &\leq \ve\;.\\
			\end{aligned}
		\end{equation}
\end{lemma}
\begin{proof}
Let $0< \xi \leq \xi^\dagger$ be arbitrary. For all $\x \in \mathcal{B}_{\frac{\tt r}{2}}^{L^2}(\xdag)$, we obtain from Cauchy-Schwarz inequality that 
\begin{equation} \label{eq:unter}
\begin{aligned}
	\abs{\x_\xi(\bs) - \xdag_\xi(\bs)} &= \abs{\int_{\R^m} \phi_\xi(\bs - \bt)\left(\x(\bt)- \xdag(\bt)\right)  \mathrm{d}\bt} \leq \norm{\phi_\xi}_{L^2(\R^m)}\norm{\x - \xdag}_{L^2(\R^m)} \\ &\leq \xi^{-\frac{m}{2}} \norm{\phi}_{L^2(\R^m)} \frac{\tt r}{2} \text{ for all } \bs \in \Omega_\X\;.
\end{aligned}
\end{equation}
The inequality $\norm{\phi_\xi}_{L^2(\R^m)} \leq \xi^{-\frac{m}{2}} \norm{\phi}_{L^2(\R^m)}$ holds, since
\begin{align*}
\norm{\phi_\xi}_{L^2(\R^m)}^2 = \int_{\R^m} \xi^{-2m}\abs{\phi\left(\frac{\bs}{\xi}\right)}^2 \mathrm{d}\bs = \int_{\R^m}  \xi^{-2m}\abs{\phi(\bs)}^2 \xi^m \mathrm{d}\bs =  \xi^{-m}\int_{\R^m}  \abs{\phi(\bs)}^2 \mathrm{d}\bs \leq \xi^{-m}\norm{\phi}^2_{L^2(\R^m)}\;.
\end{align*}
Since $\x$ and $\xdag$ are extend by zero outside of $\Omega_\X$ it follows that
 $$\norm{\x - \xdag}_{L^2(\R^m)} = \norm{\x - \xdag}_{L^2(\Omega_\X)} < \frac{\tt r}{2}.$$
Note that the standard mollifier $\phi$ and all of its derivative are $L^2(\R^m)$-functions, since $\phi$ is a smooth and compactly supported by definition.
It follows from \autoref{eq:unter} that
\begin{align*}
	\norms{\x_\xi - \xdag_\xi}_{L^\infty(\Omega_\X)} \leq \xi^{-\frac{m}{2}} \norm{\phi}_{L^2(\R^m)} \frac{\tt r}{2} := r(\xi)\,,
\end{align*}
or in other words
 
$\x_\xi \in \mathcal{B}_{r(\xi)}^{L^\infty}(\xdag_\xi) \subseteq \X_\xi$. In summary, we have shown that
\begin{equation} \label{eq:helpthml2}
M_\xi\left(\mathcal{B}_{\frac{\tt r}{2}}^{L^2}(\xdag)\right) \subseteq \mathcal{B}_{r(\xi)}^{L^\infty}(\xdag_\xi)\;.
\end{equation}
Now, we want to prove the compactness in $C(\overline{\Omega_\X})$ of $\mathcal{B}_{r(\xi)}^{L^\infty}(\xdag_\xi)$. Analogously, as for \autoref{eq:unter}, we obtain that 
\begin{align*}
\norm{\x_\xi}_{L^\infty(\Omega_\X)} &\leq \xi^{-\frac{m}{2}} \norm{\phi}_{L^2(\R^m)} \norm{\x}_{L^2(\Omega_\X)} \leq \xi^{-\frac{m}{2}} \norm{\phi}_{L^2(\R^m)} \left(\norm{\x - \xdag}_{L^2(\Omega_\X)} + \norm{\xdag}_{L^2(\Omega_\X)}\right) \\
&\leq \xi^{-\frac{m}{2}} \norm{\phi}_{L^2(\R^m)} \left(\frac{\tt r}{2} + \norm{\xdag}_{L^2(\Omega_\X)}\right) \text{ for all } \x_\xi \in \mathcal{B}_{r(\xi)}^{L^\infty}(\xdag_\xi)
\end{align*}
and analogously,
\begin{align*}
\norm{\nabla\x_\xi}_{L^\infty(\Omega_\X)} \leq \xi^{-\frac{m}{2}-1} \norm{\nabla\phi}_{L^2(\R^m)} \left(\frac{\tt r}{2} + \norm{\xdag}_{L^2(\Omega_\X)}\right).
\end{align*}
Since for fixed $0 < \xi \leq \xi^\dagger$, $\mathcal{B}_{r(\xi)}^{L^\infty}(\xdag_\xi)$ is uniformly bounded and equicontinuous with respect to the $(C(\overline{\Omega_\X}),\norm{\cdot}_{L^\infty(\Omega_\X)})$-topology, it follows from the theorem of Arzel\`a-Ascoli \cite{Rud76} that $\mathcal{B}_{r(\xi)}^{\infty}(\xdag_\xi)$ is compact in $C(\overline{\Omega_\X})$. Note that $\mathcal{B}_{r(\xi)}^{L^\infty}(\xdag_\xi)$ is getting larger as $\xi \to 0$.

In order to show that $F$ is continuous in $M_{\xi}(\dom{F})$ with respect to $\norm{\cdot}_{L^\infty(\Omega_\X)}$ on $C(\overline{\Omega_\X})$, we prove that, if $\x_{\xi,n} \to \x_\xi$ in $M_\xi(\dom{F})$ with respect to $\norm{\cdot}_{L^\infty(\Omega_\X)}$, then it follows that $F[\x_{\xi,n}] \to F[\x_\xi]$ in $L^2(\Omega_\Y)$: Since $\Omega_\X$ is bounded, it follows that there exists a constant $C>0$ such that

$$\norm{\x_{\xi,n} - \x_\xi}_{L^2(\Omega_\X)} \leq C \norm{\x_{\xi,n} - \x_\xi}_{L^\infty(\Omega_\X)}\;.$$

Since $M_\xi(\dom{F}) \subseteq \dom{F}$ and $\x_{\xi,n} \to \x_\xi$ in $C(\overline{\Omega_\X})$ by assumption, it follows that $\x_{\xi,n} \to \x_\xi$ in $\dom{F}$ with respect to $\norm{\cdot}_{L^2(\Omega_\X)}$ and  $\x_\xi \in \dom{F} \subseteq L^2(\Omega_\X)$. Due to the assumed continuity of $F$ in $\dom{F}$ with respect to $L^2(\Omega_\X)$, it follows that $F[\x_{\xi,n}] \to F[\x_\xi]$ in $L^2(\Omega_\Y)$ and we have proven the assertion.

With the compactness of $\mathcal{B}_{r(\xi)}^{L^\infty}(\xdag_\xi)$ in $C(\overline{\Omega_\X})$, and the continuity of $F$ in $M_\xi(\dom{F})$ with respect to $\norm{\cdot}_{L^\infty(\Omega_\X)}$, we can apply \autoref{th:operatorapxc} to obtain: For every $0 < \xi \leq \xi^\dagger$ and for every $\ve > 0$, there exists a coefficients tuple $\mathcal{T}_{\n(\xi,\ve)}$ (as defined in \autoref{eq:D}) and $\n := \n(\xi,\ve) = \abs{\mathcal{T}_{\n(\xi,\ve)}} \in \N$, such that the operator $F_{\n(\xi,\ve)} := F_{\n}$ from \autoref{eq:deeponet} satisfies
          \begin{equation} \label{eq:helthml2_2}
             \norms{F[\x_\xi] - F_\n[\x_\xi]}_{L^2(\Omega_\Y)} < \ve \text{ for all } \x_\xi \in \mathcal{B}_{r(\xi)}^{L^\infty}(\xdag_\xi).
        \end{equation}
        From \autoref{eq:helpthml2}, we have that \autoref{eq:helthml2_2} holds for all $\x_\xi \in M_\xi\left(\mathcal{B}_{\frac{\tt r}{2}}^{L^2}(\xdag)\right).$ Taking the supremum on the left hand side gives the assertion.
\end{proof}
\begin{remark} We emphasize that \autoref{eq:help3} holds for fixed $\xi$. A uniform estimate with respect to $\ve$ does not hold.
\end{remark}
In the following we derive approximation properties of a neural operator $F_\n$ as defined in \autoref{eq:deeponet} on the space $\X_\xi$. This setting distinguishes the result from previous results.

\begin{theorem}[$F_\n$ from \autoref{eq:deeponet} in $L^2$]\label{th:l2 operator apx}
	Let $F: \mathcal{D}(F) \subseteq \X=L^2(\Omega_\X) \to L^2(\Omega_\Y)$ be weakly sequentially closed, continuous and Fr\'echet differentiable with Lipschitz continuous derivative in $\mathcal{B}^{L^2}_{\tt r}(\xdag)$ (see \autoref{eq:lipschitz}). Moreover, for every $\xi > 0$, $\dom{F}$ satisfy $M_\xi(\dom{F}) \subseteq \dom{F}$ and $M_\xi(\dom{F})$ is compact in $C(\overline{\Omega_\X})$. Then, for every $\xi > 0$ satisfying \autoref{eq:xidag} and every $\ve > 0$, there exists a number $n(\xi,\ve)\in \N$, a coefficient vector $\mathcal{T}_{\n(\xi,\ve)}$ with associated operator $F_{\n(\xi,\ve)}$ from \autoref{eq:deeponet}, such that	
	\begin{equation}\label{eq:deeponeterror_l2}
		\norms{F[\x] - F_{\n(\xi,\ve)}[\x_{\xi}]}_{L^2(\Omega_\Y)} \leq \ve + L_0 \norm{\x - \x_{\xi}}_{L^2(\Omega_\X)} \text{ for all } \x \in \mathcal{B}_{\frac{\tt r}{2}}^{L^2}(\xdag)\;,
	\end{equation}
where $L_0$ denotes the Lipschitz constant of $F$ in $\mathcal{B}_{\tt r}(\xdag)$ (see \autoref{eq:lipschitz0}).
\end{theorem}
\begin{proof}
From \autoref{le:l2 operator apx}, we get the uniform estimate 
\begin{equation} \label{eq:hi1}
	\norm{F[\x]-F[\x_\xi]}_{L^2(\Omega_\Y)} \leq L_0 \norm{\x - \x_\xi}_{L^2(\Omega_\X)} \text{ for all } \x \in \mathcal{B}_{\frac{\tt r}{2}}^{L^2}(\xdag) \text{ and } 0 < \xi < \xi^\dagger\;.
\end{equation}
From \autoref{le:l2 operator apxa}, it follows that for every $0 < \xi < \xi^\dagger$ and every $0 < \ve$, there exists $\n(\xi,\ve) \in \N$ and a corresponding operator $F_{\n(\xi,\ve)}$ from \autoref{eq:deeponet} such that
\begin{equation} \label{eq:hi2}
	\norm{F[\x_{\xi}] - F_{\n(\xi,\ve)}[\x_{\xi}]}_{L^2(\Omega_\Y)} \leq \ve  \text{ for all } \x \in \mathcal{B}_{\frac{\tt r}{2}}^{L^2}(\xdag)\;.
\end{equation}
Combining \autoref{eq:hi1} and \autoref{eq:hi2}, we find
\begin{equation*}
	\begin{aligned}
		\norms{F[\x] - F_{\n(\xi,\ve)}[\x_{\xi}]}_{L^2(\Omega_\Y)} \leq \ve + L_0\norm{\x - \x_{\xi}}_{L^2(\Omega_\X)} \text{ for all } \x \in \mathcal{B}_{\frac{\tt r}{2}}^{L^2}(\xdag)\;.
	\end{aligned}
	\end{equation*}
\end{proof}
With the proven approximation properties in \autoref{eq:deeponeterror_l2_dup}, we can prove a convergence rates result of Tikhonov-regularized solutions minimizing the functional $\Tik_{\xi\n}^{\alpha\delta}$, defined in \autoref{eq:discTikxi}.

\begin{theorem} \label{thm:lebesguenonlin}
		Let $F: \dom{F} \subseteq \X= L^2(\Omega_\X) \to L^2(\Omega_\Y)$ be a mapping, which satisfies the following properties: For every $0 < \xi \leq \xi^\dagger$
\begin{itemize}
		\item $M_\xi(\dom{F}) \subseteq \dom{F}$.
		\item $M_\xi(\dom{F})$ is compact in $C(\overline{\Omega_\X})$.
		\item  There exists ${\tt r}>0$ such that $\mathcal{B}_{\tt r}^{L^2}(\xdag) \subseteq \dom{F}$ with  ${\tt r} > 2\norms{\x^{(0)}-\xdag}_{L^2(\Omega_\X)}$ and $\dom{F}$ is bounded and weakly closed in $L^2(\Omega_\X)$.
		\item $F : \dom{F} \subseteq L^2(\Omega_\X) \to L^2(\Omega_\Y)$ is continuous.
		\item $F : \dom{F} \subseteq L^2(\Omega_\X) \to L^2(\Omega_\Y)$ is weakly sequentially closed and Fr\'echet differentiable with Lipschitz continuous derivative on $\mathcal{B}^{L^2}_{\tt r}(\xdag)$ (see \autoref{eq:lipschitz}).
\end{itemize}
Moreover let $\x^{(0)}$ satisfy the following source condition
	\begin{equation} \label{eq:sourcenl2}
		\x^{(0)}-\xdag = F'[\xdag]^* \omega \text{ with } L_1\norms{\omega}_\Y < 1,
	\end{equation}
	where $L_1$ denotes the Lipschitz constant of the Fr\'echet derivative of $F$ in $\mathcal{B}_{\tt r}(\xdag)$ (see \autoref{eq:lipschitz} and \cite{NeuSch90}).
Then, for every $0 < \xi < \xi^\dagger$ (satisfying \autoref{eq:xidag}) and every $\ve > 0$, there exists a coefficient vector $\mathcal{T}_{\n(\xi,\ve)}$ with associated operator $F_{\n(\xi,\ve)}$ from \autoref{eq:deeponet}, such that	
	\begin{equation}\label{eq:deeponeterror_l2_dup}
		\norms{F[\xdag] - F_{\n(\xi,\ve)}[\x_{\xi}^\dagger]}_{L^2(\Omega_\Y)} \leq \ve + \rho(\xi)\,, 
	\end{equation} 
	where
	\begin{equation} \label{eq:rhoxi}
			\rho(\xi) := L_0 \norms{\xdag - \xdag_{\xi}}_{L^2(\Omega_\X)},
	\end{equation}
	where $L_0$ denotes the Lipschitz constant of $F$ in $\mathcal{B}_{\tt r}(\xdag)$ (see \autoref{eq:lipschitz0}).
Moreover, for $\n := \n(\xi,\ve)$, let $\x_{\xi\n}^{\alpha\delta\eta}$ be an approximate minimizer of \autoref{eq:discTikxi} over the set $\X_\xi$ with accuracy $\eta > 0$. Then,
	with the choice 
	\begin{equation} \label{eq:choicel2}
		\alpha \sim \delta, \; \eta \sim \delta^2 \,, \ve \sim \delta \,, \rho(\xi) \leq \delta,
	\end{equation}
	we obtain the convergence rate 
	\begin{equation} \label{eq:ratel2}
		\norms{\x_\n^{\alpha\delta\eta}-\xdag}_{\X} = \mathcal{O}(\sqrt{\delta}) 
	\end{equation} 
	of the regularized solution.
\end{theorem}

\begin{proof} 	
	First we note that the neural operator $F_\n$ with $\n = \n(\xi,\ve)$ is well-defined for all $\x_\xi \in M_{\xi}(\dom{F})$ because every $\x_\xi$ is continuous and therefore point evaluation in $F_\n$ makes sense.

\autoref{eq:deeponeterror_l2_dup} directly follows from \autoref{eq:deeponeterror_l2}, since $\x_\xi^\dagger \in \mathcal{B}_{{\tt r}/2}^{L^2}(\xdag)$ (see \autoref{eq:bound}).

Now, to show $\autoref{eq:ratel2}$, let $\x_{\xi\n}^{\alpha\delta\eta}$ be an approximate minimizer of $\Tik_{\xi\n}^{\alpha\delta}$ with accuracy $\eta$. Then, by the definition of an approximate minimizer, we get
        \begin{equation*}
        	\begin{aligned}
        		\Tik_{\xi\n}^{\alpha\delta}[\x_{\xi\n}^{\alpha\delta\eta}] &= \norms{F_\n[\x_{\xi\n}^{\alpha\delta\eta}] - \y^\delta}^2_{L^2(\Omega_\Y)} + \alpha\norms{\x_{\xi\n}^{\alpha\delta\eta} - \x^{(0)}}^2_{L^2(\Omega_\X)}\\
        		&\leq \norms{F_\n[\xdag_\xi] - \y^\delta}^2_{L^2(\Omega_\Y)} + \alpha\norms{\xdag_\xi - \x^{(0)}}^2_{L^2(\Omega_\X)} + \eta\;.
        	\end{aligned}
        \end{equation*}
        Let $\rho(\xi)$ be as defined in \autoref{eq:rhoxi}. It follows that
        \begin{align} \label{eq:c1}
        \norms{\xdag_\xi - \x^{(0)}}^2_{L^2(\Omega_\X)} \leq \left(\norms{\xdag_\xi - \xdag}_{L^2(\Omega_\X)} + \norms{\xdag - \x^{(0)}}_{L^2(\Omega_\X)}\right)^2 \leq \left(L_0^{-1}\rho(\xi) + \norms{\xdag - \x^{(0)}}_{L^2(\Omega_\X)} \right)^2\;.
    \end{align}
       Moreover, \autoref{eq:deeponeterror_l2_dup} gives
\begin{align} \label{eq:c2}
		 \norms{F_\n[\xdag_\xi] - \y^\delta}^2_{L^2(\Omega_\Y)} \leq 
            \left(\norms{F_\n[\xdag_\xi] - F[\xdag]}_{L^2(\Omega_\Y)} +\norms{F[\xdag] - \y^\delta}_{L^2(\Omega_\Y)}\right)^2
            \leq (\ve + \rho(\xi) + \delta)^2\;.
\end{align} 

Now, from \autoref{eq:c1} and \autoref{eq:c2}, we can follow the strategy of \cite[Theorem 2.3a]{NeuSch90} to get with the parameter choice \autoref{eq:choicel2}
    \begin{equation} \label{eq:xi}
        	\norms{\x_{\xi\n}^{\alpha\delta\eta} - \xdag}_{L^2(\Omega_\X)} = \mathcal{O}(\sqrt{\delta})\;.
    \end{equation}
\end{proof}

\begin{remark}
	In \cite{NeuSch90}, we assumed that $\norm{F[\x]-F_\n[\x_\xi]}_\Y \leq \rho_\n$ holds locally uniformly in a neighborhood of $\xdag$ in order to deduce an estimate for $\norms{F_\n[\xdag_\xi] - \y^\delta}_{L^2(\Omega_\Y)}$ that depends on $\rho_\n$ (similar to \autoref{eq:c2}). However, it is in fact sufficient to require that \autoref{eq:deeponeterror_l2} holds for $\x = \xdag$, which is the weaker condition \autoref{eq:deeponeterror_l2_dup}.
\end{remark}

Finally, we consider the error estimate for the c-example using neural operator approximation. 
\begin{example}[c-example with neural operator approximation] \label{eg:cNN}
We continue with \autoref{ex:2point}. Let $\xdag > \gamma > 0 \in H^2(0,1)$ and assume that $\x^{(0)} \in \X=L^2(\Omega_\X)$ satisfies the source condition in \autoref{de:source_c}. Instead of a classical finite element based approach, we consider Tikhonov regularization with a neural operator approximation.  That is, for all $0 < \xi \leq \xi^\dagger$ and $\ve > 0$, we compute the approximate minimizer $\x_{\xi\n}^{\alpha\delta\eta}$ of \autoref{eq:discTikxi} over the set $\X_\xi$, where we chose $\n := \n(\xi,\ve)$ accordingly.

The general assumptions in \autoref{sec:assump} are satisfied for $F$ and $\dom{F}$ (see \cite{NeuSch90}, Example 3.1), with the exception that $\mathcal{B}_{\tt r}^{L^2(\Omega)} \subseteq \dom{F}$. As it was shown in \cite{NeuSch90} this assumption can be circumvented for this particular example. To apply \autoref{thm:lebesguenonlin}, we need to additionally check that basic assumptions of  \autoref{thm:lebesguenonlin} that 
\begin{enumerate}
	\item $M_\xi(\dom{F}) \subseteq \dom{F} = \set{\x \in L^2(0,1): \x \geq 0 \text{ a.e.} }$ and
	\item $M_\xi(\dom{F})$ is compact in $C(\overline{\Omega_\X})$.
\end{enumerate} 
We prove this: 
\begin{enumerate}
\item $\dom{F}$ is a positive cone in $L^2(\Omega_\X)$, and convolution with positive kernel preserves positivity and the belonging to $L^2(\Omega_\X)$.
\item The operator $F$ defined in \autoref{eq:invillpos} can be extended to the domain
$$\mathcal{D}(F):= \{\x \in L^2(0,1) : \norm{\x - \hat{\x}}_{L^2} \leq \hat{\ve}, \hat{\x} \geq 0 \text{ a.e.}, \hat{\x} \in L^2(0,1)\},$$
for some fixed $ \hat{\epsilon} > 0$, while preserving the same properties as $F$ in \autoref{ex:2point} (see \cite{NeuSch90}). In the following we show that for fixed $0 < \xi \leq \xi^\dagger$, $M_\xi(\mathcal{D}(F))$ is uniformly bounded and equicontinuous in $C([0,1])$, so that we can apply the theorem of Arzel\`a-Ascoli \cite{Rud76} and obtain that $M_\xi(\mathcal{D}(F))$ is compact in $C([0,1])$. 
Analogously to \autoref{eq:unter}, we see that for fixed $\xi$ we have
$$\norm{\x_\xi}_{L^\infty(0,1)} \leq \xi^{\frac{-1}{2}}\norm{\phi}_{L^2(\R)}\norm{\x}_{L^2(0,1)} \leq \xi^{\frac{-1}{2}}\norm{\phi}_{L^2(\R)} \left( \norm{\hat{\x}}_{L^2(0,1)}  + \hat{\ve}\right)\;,$$
which shows the uniformly bounded of $M_\xi(\dom{F})$ in $C([0,1])$. Moreover, the derivative of $\x_\xi$ satisfies,
$$\norm{\x_\xi'}_{L^\infty(0,1)} \leq \xi^{\frac{-3}{2}}\norm{\phi'}_{L^2(\R)} \left( \norm{\hat{\x}}_{L^2(0,1)}  + \hat{\ve}\right)\;.$$
The derivative is uniformly bounded and therefore
$M_\xi(\mathcal{D}(F))$ is equicontinuous in $C([0,1])$.
\end{enumerate}

Now, we can apply \autoref{thm:lebesguenonlin}. Since $\xdag \in H^2(0,1)$, we have $\rho(\xi) = \mathcal{O}(\xi^2)$, where $\rho(\xi)$ is as defined in \autoref{eq:rhoxi}. With the choice of parameters
	\begin{equation*}
		\alpha \sim \delta, \; \eta \sim \delta^2 \,,  \ve \sim \delta \,, \xi \leq \sqrt{\delta}.
	\end{equation*}
    we obtain the convergence rate
    \begin{equation}
        \norms{\x_{\xi\n}^{\alpha\delta\eta} - \xdag}_{L^2(\Omega_\X)} = \mathcal{O}(\sqrt{\delta})\;
    \end{equation} 
\end{example}

\begin{remark}
	A conceptual difference between the quantitative results in \cite{NeuSch90} and those presented here is that the estimate \autoref{eq:deeponeterror_l2} depends on two parameters: the specified accuracy $\ve$ and the amount of mollification $\xi$. For finite element approximations in \cite{NeuSch90}, the estimate for $\norm{F[\x]-F_{\tt n}[\x]}_\Y$ is of order $\n^{-p}$ (depending on the smoothness of $\x$) for some given $p>0$. This means that we do not have to find the optimal relation between $\n(\xi,\ve)$; instead, it is explicitly given as a result of Cea's lemma \cite{Cia78} and the Aubin-Nitzsche trick \cite{Aub67,Nit68}.
\end{remark}
\begin{remark}
	Above, we have investigated how to apply neural operators to functions in $L^p$-spaces, which do not allow for pointwise evaluation everywhere. We have analyzed how we can apply regularization on ``presmoothed spaces'' $\X_\xi$. The technical difficulty arises from the fact that the numbers of neurons $\n$ depends on the amount of pre-smoothing $\xi$, which complicates the analysis considerably. 
\end{remark}

\section{Numerical experiments} \label{sec:numerics}
To verify our theoretical results, we compare solving the inverse problems in \autoref{ex: darcyflow} and \autoref{ex:2point} using Tikhonov regularization with the surrogate neural operators $F_{\tt n}$, as defined in \autoref{eq:deeponet}, and $F_\no^\#$, as defined in \autoref{eq:lin}, classical finite element surrogate operators.

\subsection*{Experimental setting}
To generate the training data $ \mathcal{S}_{F'[\hat{\x}^{(0)}]}^{(0)}$ as defined in \autoref{eq:S_G}, we use a Fourier sine basis for $\x^{(\ell)}$:
$$\x^{(\ell)}(s) = 1 + \sum_{k=1}^{\tt N} c_k\sin(2\pi k s) \quad \text{for} \quad s \in [0,1],$$
where ${\tt N}$ is the number of the training data. The coefficients $c_k$ are sampled from a normal distribution:
$$c_k \sim \mathcal{N}\left(0,\frac{1}{k^2}\right).$$
To ensure the positivity of $\x^{(\ell)}$, the coefficients are rescaled if necessary so that
$$\sum_{k=1}^{\tt N} \abs{c_k} < 1.$$

We then apply the Gram–Schmidt procedure to obtain an orthonormal set $\{\ulx^{(\ell)} : \ell = 1,\cdots, \no\}$ as in \autoref{eq:sample}. For each $\ulx^{(\ell)}$, the corresponding boundary value problem is solved numerically using the finite element method (FEM): \autoref{eq:darcyBVP} for \autoref{ex: darcyflow} and \autoref{eq:invillpos} for \autoref{ex:2point}. This yields the corresponding output $$\uly^{(\ell)} = F[\ulx^{(\ell)}].$$

The linear surrogate operator $F_\no^\#$, as defined in \autoref{eq:lin}, can be computed explicitly and does not require training.

The neural operator $F_\n$, as defined in \autoref{eq:deeponet} with sigmoid activation function $\sigma(s) = \frac{1}{1+e^{-s}}$, is trained on $\no=20$ synthetically generated samples $\{(\ulx^{(\ell)},\uly^{(\ell)})\}_{\ell=1}^{20}$ (i.e. ${\tt N} = 20$), with $J = K = L = 200$, which makes $\n \approx 10^7$. Training is performed for 1000 epochs, where each epoch corresponds to one full pass through the dataset. In principle, one can train $F_\n$ directly on the non-orthonormalized data pairs $(\x^{(\ell)},\y^{(\ell)})$. However, since the construction of $F_\no^\#$ relies on the orthonormal basis representation, and the data are synthetically generated by simultaneously solving the PDE, we perform the orthonormalization upfront so that the same set of PDE solves can be reused for both models.

After training the surrogate operator $F_\n$, we minimize the Tikhonov functional in \autoref{eq:discTik_s} using the AdamW optimizer, stopping either after 1000 iterations or when the change in the objective between consecutive steps falls below $10^{-6}$. For \autoref{ex: darcyflow}, we use a learning rate of $5 \times 10^{-2}$ and a weight decay of $10^{-3}$. For \autoref{ex:2point}, we use a weight decay of $10^{-4}$, a learning rate of $1$ for intial values $x^{(0)} \equiv 0$ and a learning rate of $10^{-2}$ for for intial values $x^{(0)} \equiv 1$. Moreover, for \autoref{ex:2point}, we minimize over the mollified class $\X_\xi$ as in \autoref{def:mollifier} with a mollification parameter $\xi = 5 \times 10^{-2}$.

For comparison, the inverse problem is also solved using FEM. The parameter is approximated in a finite element space consisting of continuous, piecewise linear functions defined on a uniform partition of $[0,1]$, with mesh size $h=1/\n$ with $\n=20,100$, corresponding to 20 and 100 elements, respectively. We compare the mean squared error $\norms{\x_\n - \x}_{L^2}^2$ between the reconstructed solution $\x_\n$ and the analytical solution $\x$ given in \autoref{eq:analytic} for \autoref{ex: darcyflow} and \autoref{eq:invillpos} for \autoref{ex:2point}.

In addition, the Tikhonov functional is minimized using both the noise-free input $\y$ and perturbed inputs $\y^\delta$ with $\delta = 0.03, 0.15$, in order to study the behavior of the algorithm with respect to noise.

The regularization parameter $\alpha$ is chosen as $\alpha = 5 \times 10^{-2}$ for noise-free data, and $\alpha=\delta$ for noisy data with noise level $\delta = 0.03, 0.15$.

\subsection*{Results and discussion}
\begin{enumerate}
    \item \textbf{For \autoref{ex: darcyflow}:} We solve the inverse problem using Tikhonov regularization, employing both a trained neural operator as a surrogate model and a finite element method. We consider three different initial guesses: $\x^{(0)} \equiv 0$ (\autoref{fig:result_x0=0}), $\x^{(0)} \equiv 1$ (\autoref{fig:result_x0=1}), and $\x^{(0)}(s) = s$ (\autoref{fig:result_x0=s}). The results are then validated by comparing them with the analytic solution of \autoref{eq:darcyBVP} for ${\tt f} \equiv 2$:
\begin{equation} \label{eq:analytic}
    \begin{cases}
        \x(s) = s+1\\
        \y(s) = \frac{2\ln(s+1)-s\ln(4)}{\ln(2)},
    \end{cases}
\end{equation}
which serves as the ground truth.

\begin{figure}[H]
    \centering
	\includegraphics[width=1\linewidth]{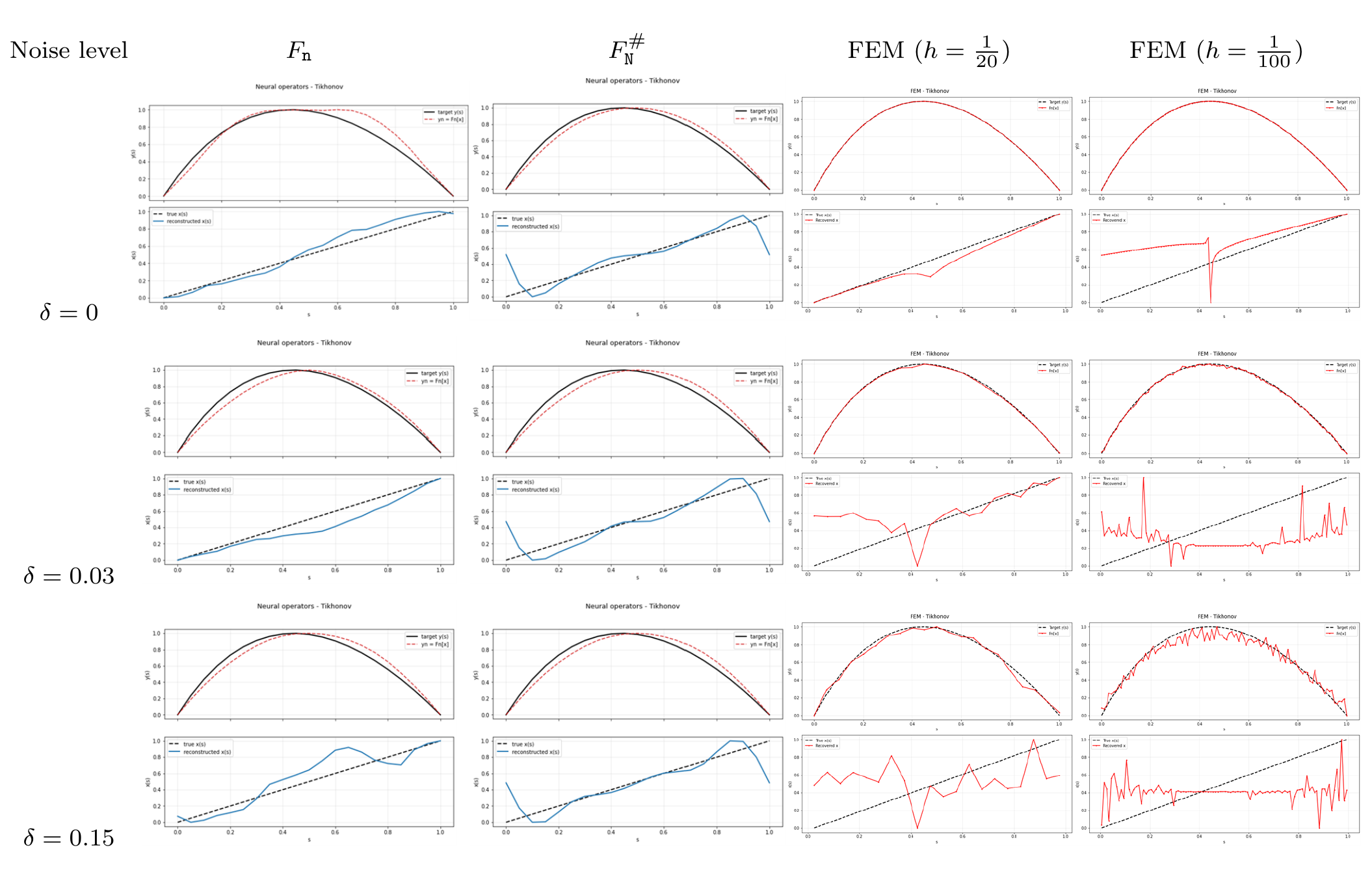}
    \caption{Results of solving the Tikhonov regularization problem with initial guess $\x^{(0)} \equiv 0$ using a neural surrogate operator $F_\n$ (first image), approximation operator $F_\no^\#$ (second image), and the finite element method with 20 elements (third image) and 100 elements (fourth image); without noise (first row), with noise $\delta = 0.03$ (second row) and $\delta=0.15$ (third row).}
    \label{fig:result_x0=0}
    
\end{figure}

\begin{figure}[H]
    \centering
	\includegraphics[width=1\linewidth]{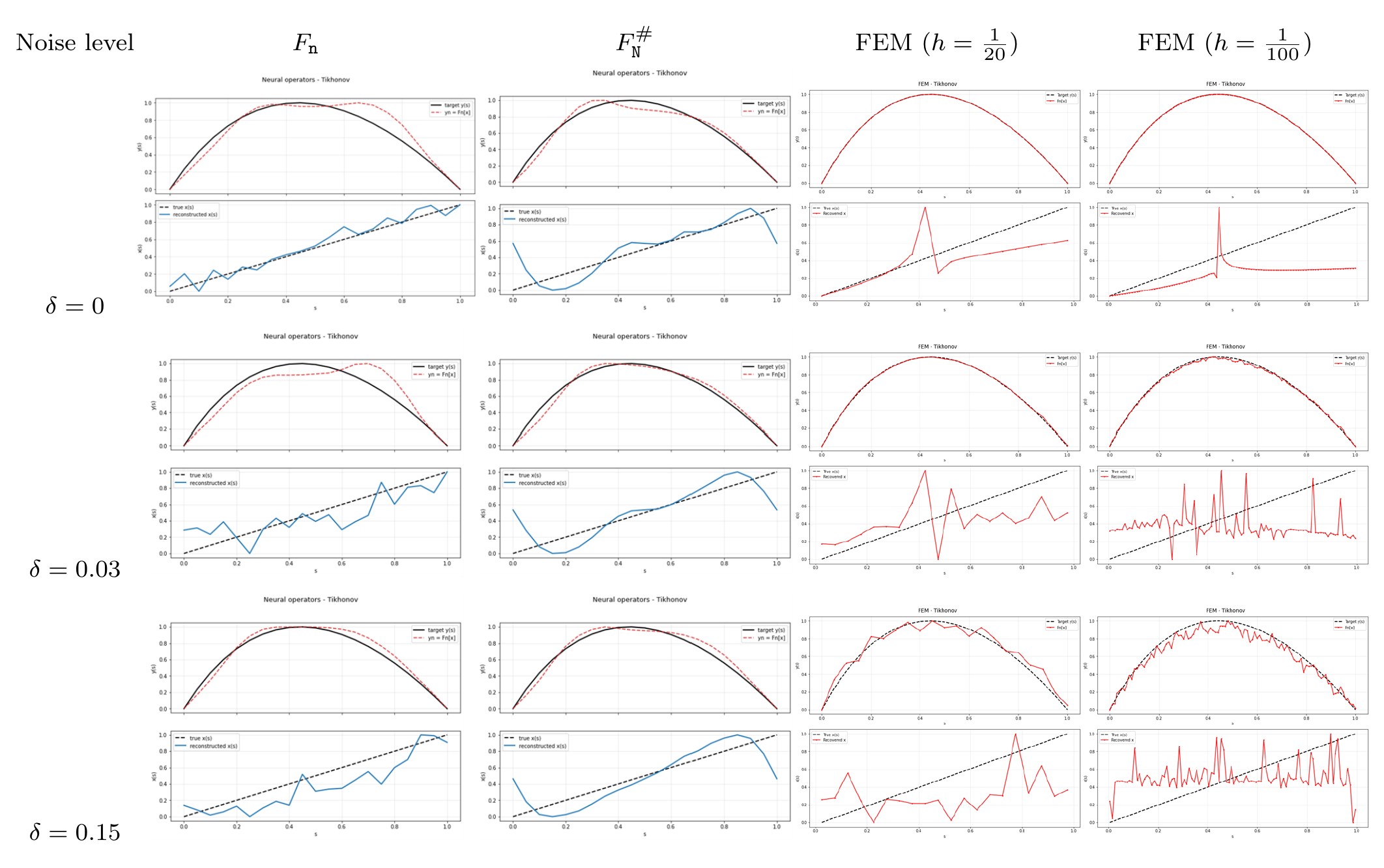}
    \caption{Results of solving the Tikhonov regularization problem with initial guess $\x^{(0)} \equiv 1$ using a neural surrogate operator $F_\n$ (first image), approximation operator $F_\no^\#$ (second image), and the finite element method with 20 elements (third image) and 100 elements (fourth image); without noise (first row), with noise $\delta = 0.03$ (second row) and $\delta = 0.15$ (third row).}
    \label{fig:result_x0=1}
\end{figure}

\begin{figure}[H]
    \centering
\includegraphics[width=1\linewidth]{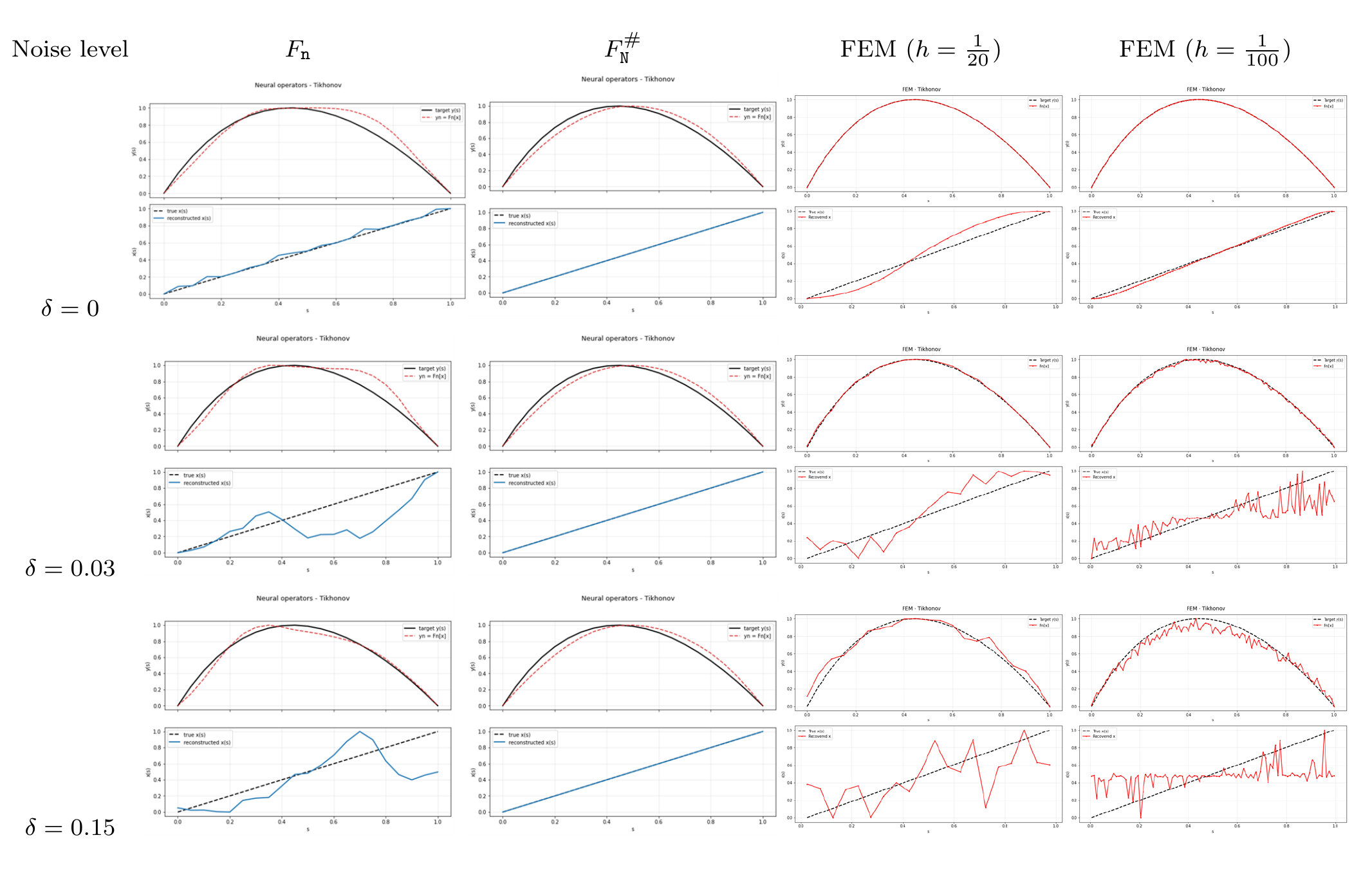}
    \caption{Results of solving the Tikhonov regularization problem with initial guess $\x^{(0)}(s) = s$ using a neural surrogate operator $F_\n$ (first image), approximation operator $F_\no^\#$ (second image), and the finite element method with 20 elements (third image) and 100 elements (fourth image); without noise (first row), with noise $\delta = 0.03$ (second row) and $\delta = 0.15$ (third row). The mean squared errors (MSE) of these results are recorded in \autoref{tab:mse} noise-free data and \autoref{tab:mse_noise} for data with noise-level $\delta = 0.03, 0.15$.}
    \label{fig:result_x0=s}
\end{figure}

\item \textbf{For \autoref{ex:2point}:} We solve the inverse problem using Tikhonov regularization, employing both a trained neural operator as a surrogate model and a finite element method. We consider two different initial guesses: $\x^{(0)} \equiv 0$ (\autoref{fig:c_result_x0=0}) and $\x^{(0)} \equiv 1$ (\autoref{fig:c_result_x0=1}). The results are then validated by comparing them with the analytic solution of 
\autoref{eq:invillpos} with ${\tt f} \equiv 1$,
\begin{equation} \label{eq:analytic_c}
    \begin{cases}
        \x(s) = 1\\
        \y(s) = \frac{-\exp(1-s)-\exp(s) +1 + \exp(1)}{1+\exp(1)},
    \end{cases}
\end{equation}
 which serves as the ground truth.

\begin{figure}[H]
    \centering
\includegraphics[width=1\linewidth]{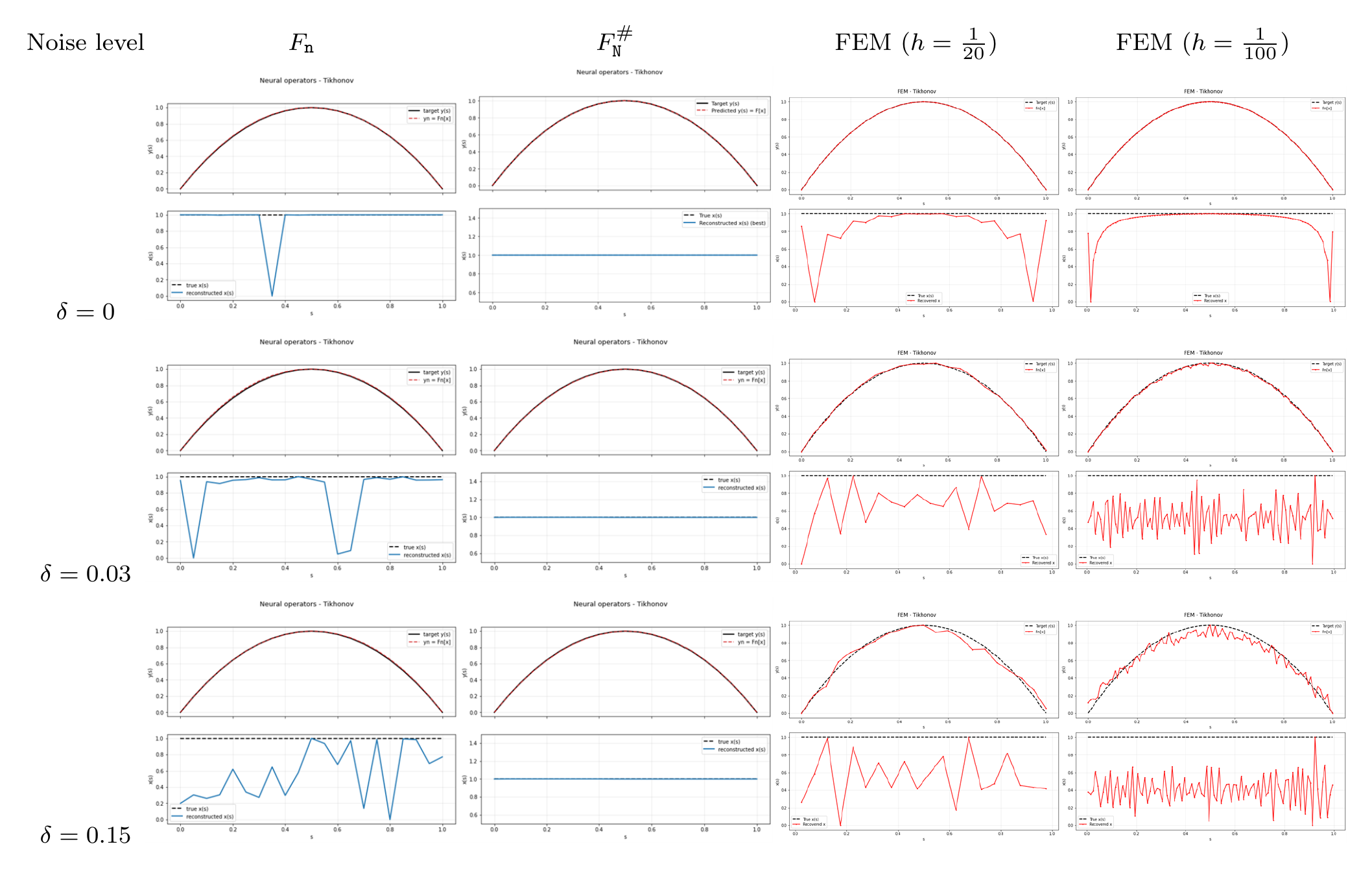}
    \caption{Results of solving the Tikhonov regularization problem with initial guess $\x^{(0)} \equiv 0$ using a neural surrogate operator $F_\n$ (first image), approximation operator $F_\no^\#$ (second image), and the finite element method with 20 elements (third image) and 100 elements (fourth image); without noise (first row), with noise $\delta = 0.03$ (second row) and $\delta = 0.15$ (third row).}
    \label{fig:c_result_x0=0}
\end{figure}

\begin{figure}[H]
     \centering
\includegraphics[width=1\linewidth]{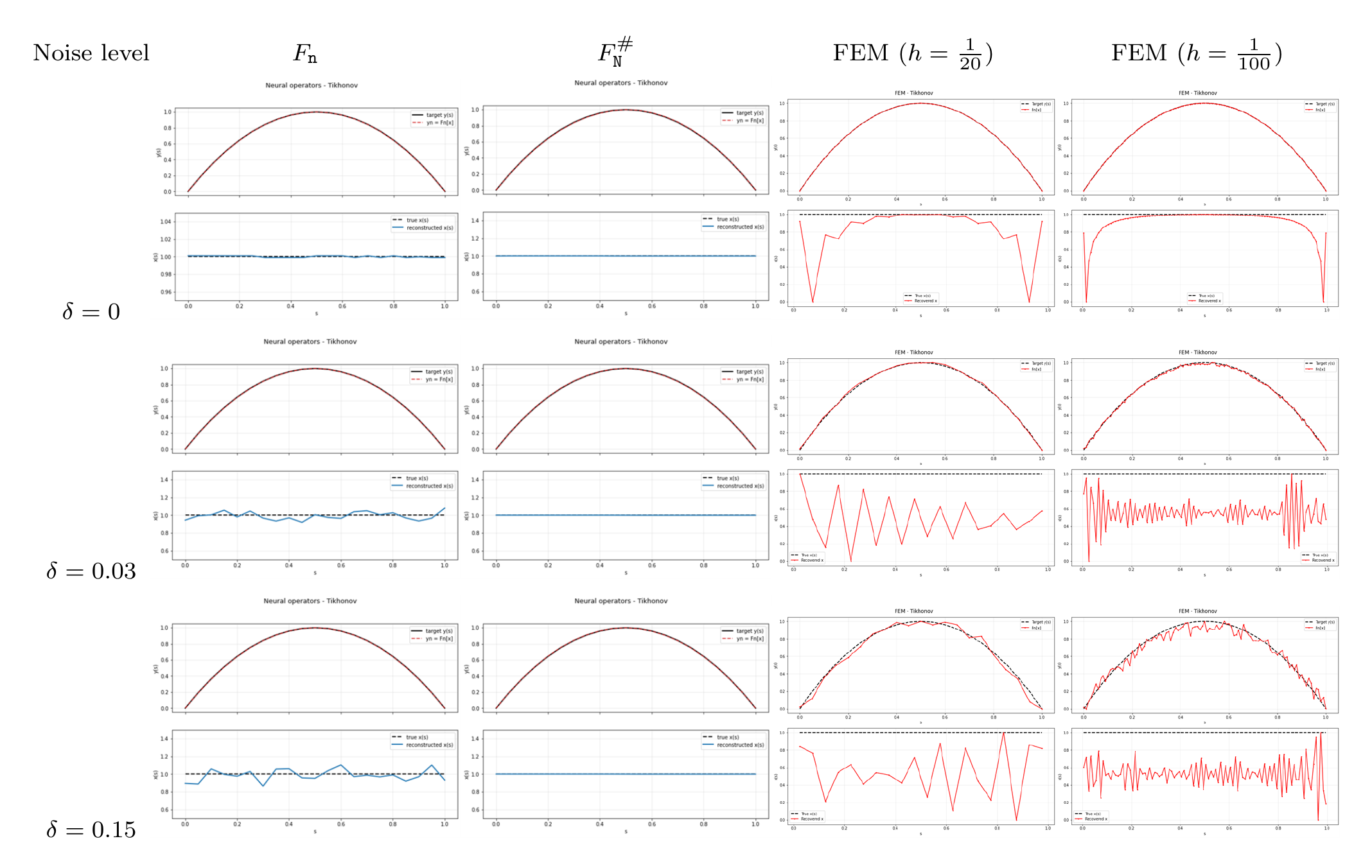}
    \caption{Results of solving the Tikhonov regularization problem with initial guess $\x^{(0)} \equiv 1$ using a neural surrogate operator $F_\n$ (first image), approximation operator $F_\no^\#$ (second image), and the finite element method with 20 elements (third image) and 100 elements (fourth image); without noise (first row), with noise $\delta = 0.03$ (second row) and $\delta=0.15$ (third row).}
    \label{fig:c_result_x0=1}
\end{figure}
\end{enumerate}

\begin{table}[H]
    \centering \footnotesize
    \begin{tabular}{c|c|c|c|c}
        & $||F_\no^\#[\x] - F[\x]||_{L^2}^2$ &  $||F_\n[\x] - F[\x]||_{L^2}^2$ & Data generation time &  Training time for $F_\n$ (s) \\ \hline
         \autoref{ex: darcyflow} & $2.6 \times 10^{-3}$ & $3.2 \times 10^{-3}$ & $0.006$ & 8 \\ 
         \autoref{ex:2point} & $1 \times 10^{-6}$ & $3.7 \times 10^{-6}$ & $0.006$ &  9\\
    \end{tabular}
    \caption{Mean squared error $||F_\no^\#[\x] - F[\x]||_{L^2}^2$ and $||F_\n[\x] - \y||_{L^2}^2$, measuring the discrepancy between the surrogate predictions $F_\no^\#[\x], F_\n[\x]$ and the reference data $\y=F[\x]$, respectively, along with the data generation and the training time of $F_\n$ in seconds (s). Here, $\no = 20$ and $\n = J(K(L + 2) + n + 1) + Lm \approx 10^{7}$, with $J = K = L = 200$ and $n = m = 1$.}
    \label{tab:mse_F}
\end{table}

From \autoref{tab:mse_F}, the surrogate operator $F_\n$ yields a small operator approximation error for both examples. The total training time for the operator $F_\n$, including data generation, is approximately 10 seconds. This indicates that, for the present problems, the model can be trained efficiently and that the resulting surrogate provides an accurate approximation of the forward operator at a low training cost.

The operator $F_\no^\#$ achieves slightly smaller error than $F_\n$ for both examples. Moreover, it is computationally more efficient, as it does not require any training.

However, both surrogate methods $F_\no^\#$ and $F_\n$, rely on the availability of training data. Although data generation is relatively fast for the two problems considered here, this may not be the case for other applications. In high-dimensional settings, synthetic data generation may be expensive if each forward evaluation is costly. Therefore, the surrogate framework is most suitable when data generation is inexpensive or when measurement or simulation data are already available, allowing the training cost to be effectively amortized over multiple uses.

\begin{table}[H]
    \hspace{0.55cm}\centering \footnotesize
    \begin{subtable}{0.48\textwidth}
        \centering
        \begin{tabular}{c|c|c|c}
            $\delta = 0$ & $\x^{(0)}(s) = 0$ & $\x^{(0)}(s) = 1$ & $\x^{(0)}(s) = s$\\ \hline
            $F_\n$ & 0.0045 & 0.0054 & 0.0006\\ \hline
            $F_\no^\#$ & 0.0255 & 0.0290 & 0.0000 \\ \hline
            FEM ($h = \frac{1}{20}$) & 0.0047 & 0.0537 & 0.0084 \\ \hline
            FEM ($h = \frac{1}{100}$) & 0.0793 & 0.1239  & 0.0007\\
        \end{tabular}
        \caption{\autoref{ex: darcyflow}, $\delta = 0$}
    \end{subtable}
    \hfill
    \begin{subtable}{0.48\textwidth}
        \centering
        \begin{tabular}{c|c|c}
            $\delta = 0$ & $\x^{(0)}(s) = 0$ & $\x^{(0)}(s) = 1$ \\ \hline
            $F_\n$  & 0.0476 & 0.0000\\ \hline
            $F_\no^\#$ & 0.0000 & 0.0000 \\ \hline
            FEM ($h = \frac{1}{20}$) & 0.1157 & 0.1156 \\ \hline
            FEM ($h = \frac{1}{100}$) & 0.0369 & 0.0287 \\
        \end{tabular}
        \caption{\autoref{ex:2point}, $\delta = 0$}
    \end{subtable}
    \caption{Mean squared error $||\x_\n^{\alpha\delta\nu} - \x||_{L^2}^2$ (for \autoref{ex: darcyflow}) and $||\x_{\n\xi}^{\alpha\delta\nu} - \x||_{L^2}^2$  (for \autoref{ex:2point}) between the reconstructed solutions $\x_\n^{\alpha\delta\nu}$, $\x_{\n\xi}^{\alpha\delta\nu}$ and the analytical solution $\x$ for noise-free input $\y$ ($\delta = 0$). Here, $\no = 20$ and $\n \approx 10^7$.}
    \label{tab:mse}
\end{table}

\autoref{tab:mse} compares the neural surrogates and FEM in terms of the Tikhonov reconstruction error.
\begin{itemize}   
    \item For \autoref{ex: darcyflow}, the surrogate $F_\n$ produces a lower mean squared error (MSE) than FEM for all tested initial guesses. In particular, the reconstruction error obtained with the neural operator $F_\n$ remains below $10^{-2}$ for all initializations considered, whereas FEM achieves errors below $10^{-2}$ only when the initial guess is sufficiently close to the exact solution. This indicates a stronger dependence of the FE-based optimization on the choice of initialization.
    
    Moreover, for FEM, refining the mesh from  $\frac{1}{20}$ to $\frac{1}{100}$ does not improve the reconstruction accuracy for the two initial guesses $\x^{(0)} \equiv 0$ and $\x^{(0)} \equiv 1$, but instead makes it worse. This suggests that the dominant source of error is related to the inverse problem itself, rather than to discretization. Finer discretization is not always beneficial for inverse problems. In fact, results in \cite{NeuSch90} show that the discretization of $\x$ should be a magnitude finer than for $\y$. Such an analysis, however, requires to balance one more additional parameter for the discretization of $\x$, which we left out in this paper.

    For the surrogate $F_\no^\#$, the MSE value is higher than to that of FEM with mesh size $\frac{1}{20}$ when using the initial guess $\x^{(0)} \equiv 0$. However, for the initial guesses $\x^{(0)} \equiv 1$ and $\x^{(0)}(s) = s$, $F_\no^\#$ provides better reconstructions than FEM with mesh size $\frac{1}{20}$, and also achieves lower MSE than FEM with mesh size $\frac{1}{100}$ for all initial guesses. Notably, with the initial guess $\x^{(0)}(s) = s$, $F_\no^\#$ yields an exact reconstruction. Similar to FEM, $F_\no^\#$ also exhibits a strong dependence on the choice of initialization. Nevertheless, $F_\no^\#$ generally performs worse than $F_\n$ in reconstruction, as it has fewer coefficients and is less flexible in representing the underlying solution.

    \item For \autoref{ex:2point}, the surrogate $F_\n$ produces a lower MSE than FEM with mesh $h = 1/20$, but a higher MSE than $h = 1/100$ for the initial guess $\x^{(0)} \equiv 0$. Surprisingly, as contrast to \autoref{ex: darcyflow}, in this example, the surrogate $F_\no^\#$ performs exceptionally well, providing (nearly) exact reconstructions for all initial guesses. However, the result depend on the choice of learning rate, which requires additional time for parameter tuning. Notably, for $\x^{(0)} \equiv 1$, even when the initial guess coincides with the exact solution, FEM does not yield an accurate reconstruction, whereas the surrogates $F_\n$ and $F_\no^\#$ provide (nearly) exact reconstructions. 

    Moreover, for this problems, refining the mesh improves the reconstruction accuracy for FEM. We also observe that without mollification, the reconstructions using surrogates $F_\no^\#$ and $F_\n$ exhibit strong oscillations, highlighting the importance of the mollification process for $L^2$-solutions.

	\item  The reconstruction using the neural operator $F_\n$ as a surrogate, despite following the overall shape of the true solution, is slightly oscillatory and sometimes shows a localized peak. This happens because a stochastic optimizer (AdamW) is used to train the operator and to minimize the Tikhonov regularization functional. In AdamW, gradients are estimated using mini-batches, which makes the computation faster but introduces additional error in the gradient estimates because each batch is only a small random subset of the data. This randomness can help generalization, but it also leads to occasional updates in the wrong direction. The effect becomes stronger when the data are noisy, since the gradients become more unstable, leading to larger fluctuations in the reconstruction. 

	  When using $F_\no^\#$ as a surrogate, a similar oscillatory behavior can also be observed, since the Tikhonov regularization functional is also minimized stochastically. However, because the model is linear and has fewer degrees of freedom, it has less flexibility to adapt to fluctuations in the stochastic updates. As a result, the oscillations are less pronounced and less frequent, and the reconstruction appears more stable and visually smoother. Moreover, it is less sensitive to noise in the input data because the linear structure restricts how the solution can respond to perturbations.
\end{itemize}

\begin{table}[H]
    \hspace{0.55cm}\centering \footnotesize
    \begin{subtable}{0.48\textwidth}
        \centering
        \begin{tabular}{c|c|c|c}
            $\delta = 0.03$ & $\x^{(0)}(s) = 0$ & $\x^{(0)}(s) = 1$ & $\x^{(0)}(s) = s$\\ \hline
            $F_\n$ & 0.0118 & 0.0304 & 0.0666\\ \hline
            $F_\no^\#$ & 0.0266 & 0.0316 & 0.0000 \\ \hline
            FEM ($h = \frac{1}{20}$) & 0.0675 & 0.0904 & 0.0196 \\ \hline
            FEM ($h = \frac{1}{100}$) & 0.1368 & 0.1390  & 0.0271\\
        \end{tabular}
        \caption{\autoref{ex: darcyflow}, $\delta = 0.03$}
    \end{subtable}
    \hfill
    \begin{subtable}{0.48\textwidth}
        \centering
        \begin{tabular}{c|c|c}
            $\delta = 0.03$ & $\x^{(0)}(s) = 0$ & $\x^{(0)}(s) = 1$ \\ \hline
            $F_\n$ & 0.1315 & 0.1011\\ \hline
            $F_\no^\#$ & 0.0000 & 0.0000 \\ \hline
            FEM ($h = \frac{1}{20}$) & 0.1850 & 0.2429 \\ \hline
            FEM ($h = \frac{1}{100}$) & 0.2686 & 0.2276 \\
        \end{tabular}
        \caption{\autoref{ex:2point}, $\delta = 0.03$}
    \end{subtable}

     \hspace{0.55cm}\centering \footnotesize
    \begin{subtable}{0.48\textwidth}
        \centering
        \begin{tabular}{c|c|c|c}
            $\delta = 0.15$ & $\x^{(0)}(s) = 0$ & $\x^{(0)}(s) = 1$ & $\x^{(0)}(s) = s$\\ \hline
            $F_\n$ & 0.0166 & 0.0457 & 0.0585\\ \hline
            $F_\no^\#$ & 0.0283 & 0.0019 & 0.0000 \\ \hline
            FEM ($h = \frac{1}{20}$) & 0.1173 & 0.1302 & 0.0656 \\ \hline
            FEM ($h = \frac{1}{100}$) & 0.1084 & 0.1017  & 0.0679\\
        \end{tabular}
        \caption{\autoref{ex: darcyflow}, $\delta = 0.15$}
    \end{subtable}
    \hfill
    \begin{subtable}{0.48\textwidth}
        \centering
        \begin{tabular}{c|c|c}
            $\delta = 0.15$ & $\x^{(0)}(s) = 0$ & $\x^{(0)}(s) = 1$ \\ \hline
            $F_\n$ & 0.2887 & 0.0045\\ \hline
            $F_\no^\#$ & 0.0000 & 0.0000 \\ \hline
             FEM ($h = \frac{1}{20}$) & 0.3389 & 0.2478 \\ \hline
            FEM ($h = \frac{1}{100}$) & 0.3813 & 0.2486 \\
        \end{tabular}
        \caption{\autoref{ex:2point}, $\delta = 0.15$}
    \end{subtable}
    \caption{Mean squared error $||\x_\n^{\alpha\delta\nu} - \x||_{L^2}^2$ (for \autoref{ex: darcyflow}) and $||\x_{\n\xi}^{\alpha\delta\nu} - \x||_{L^2}^2$  (for \autoref{ex:2point}) between the reconstructed solutions $\x_\n^{\alpha\delta\nu}$, $\x_{\n\xi}^{\alpha\delta\nu}$ and the analytical solution $\x$ for perturbed inputs $\y^\delta$ with noise levels $\delta = 0.03, 0.15$. Here, $\no = 20$ and $\n \approx 10^7$.}
    \label{tab:mse_noise}
\end{table}

From \autoref{tab:mse_noise}, we observe that $F_\no^\#$ is the most stable method with respect to varying noise levels, as its MSE values remain approximately the same across differen noise level $\delta = 0, 0.03, 0.15$ for both examples. In contrast, the performance of $F_\n$ deteriorates as the noise level increases for \autoref{ex:2point}. For \autoref{ex: darcyflow}, the error increases compared to the noise-free case, but remains similar across the considered noise levels. The FEM approach, however, degrades drastically under noise. 

Furthermore, we observe that using a mesh size of $h = 1/100$ leads to an overly fine discretization, which induces oscillatory behavior in the reconstruction for noisy input. Even if the data is noise-free, increasing the number of mesh points to $100$ most of the time does not improve the reconstruction.

\begin{table}[H]
    \begin{subtable}{0.48\textwidth}
    \centering \footnotesize
    \begin{tabular}{c|c|c}
        &  $||F_\n[\x] - F[\x]||_{L^2}^2$ & $||F_\no^\#[\x] - F[\x]||_{L^2}^2$ \\ \hline
         $\no = 10$ & $3.8 \times 10^{-3}$ & $3.1 \times 10^{-3}$\\
         $\no = 20$ & $3.2 \times 10^{-3}$ & $2.6 \times 10^{-3}$ \\ 
         $\no = 30$ & $3.0 \times 10^{-3}$ & $2.3 \times 10^{-3}$\\ 
         $\no = 40$ & $3.7 \times 10^{-3}$ & $3.2 \times 10^{-3}$
    \end{tabular}
     \caption{\autoref{ex: darcyflow}}
    \end{subtable} 
    \hfill
    \begin{subtable}{0.48\textwidth}
    \centering \footnotesize
    \begin{tabular}{c|c|c}
        &  $||F_\n[\x] - F[\x]||_{L^2}^2$ & $||F_\no^\#[\x] - F[\x]||_{L^2}^2$ \\ \hline
         $\no = 10$ &  $3.9 \times 10^{-3}$ & $1 \times 10^{-6}$  \\
         $\no = 20$ &  $3.7 \times 10^{-6}$ & $1 \times 10^{-6}$   \\ 
         $\no = 30$ & $2.4 \times 10^{-6}$ & $1 \times 10^{-6}$    \\ 
         $\no = 40$ & $4.2 \times 10^{-6}$ & $1 \times 10^{-6}$ 
    \end{tabular}
     \caption{\autoref{ex:2point}}
    \end{subtable} 
    \caption{Mean squared error $||F_\no^\#[\x] - F[\x]||_{L^2}^2$ and $||F_\n[\x] - \y||_{L^2}^2$ computed for different numbers of training samples $\no = 10,20,30,40$, and $\n = J(K(L + 2) + n + 1) + Lm \approx 10^{7}$, with $J = K = L = 200$ and $n = m = 1$.} 
    \label{tab:mse_F_N}
\end{table}

A similar phenomenon is observed when increasing the number of training samples $\no$ for both surrogate operator $F_\n$ and $F_\no^\#$ (see \autoref{tab:mse_F_N}). Specifically, increasing $\no$ reduces the prediction error $||F_\n[\x] - \y||_{L^2}^2$ as $\no$ increases from 10 to 20 and 30, however, from $\no = 40$, the error saturates, and further increasing the training set does not yield any improvement. This behavior indicates diminishing returns with respect to the training data size, suggesting that the model has already captured the essential features of the underlying operator. Nevertheless, increasing the amount of training data does not improve the reconstruction error, and in some cases even leads to deterioration. This raises the question of how much training data is sufficient in practice and could be considered in the future.

Overall, the neural surrogate $F_\n$ provides more accurate and robust reconstructions than FEM across both examples, with lower sensitivity to the initial guess. $F_\no^\#$ offers a competitive alternative without any training cost and lower per-solve time, and it can provide better reconstructions than $F_\n$ when the initial guess is sufficiently close to the exact solution. However, it attains lower accuracy in other cases, reflecting a stronger dependence on the initial guess. This behavior is due to its smaller number of coefficients, which limits flexibility and prevents it from matching the reconstruction accuracy and stability of $F_\n$. These observations highlight a trade-off between computational cost and reconstruction accuracy.

Notably, although $F_\no^\#$ can achieve better accuracy in approximating the forward operator than $F_\n$, this does not always translate to the inverse problem. This highlights the instability of inverse problems, where higher forward accuracy does not necessarily guarantee improved reconstruction performance.

\section*{Conclusion}
\label{se:conclusion}

In this paper, we studied Tikhonov regularization for solving inverse problem, where neural operators are used as surrogates for the forward operator. The developed theory is based on the theory of finite dimensional approximations of Tikhonov regularization. These results show how to choose the regularization parameter in dependence of the discretization and the approximation error of the surrogate operator. Traditionally, for operator approximation, finite element and finite difference have been used, while here we study neural operators, which are computed from supervised training data.
We have explored the application of neural operators in regularization with two basic examples of inverse problems. For applications, we realized that the existing approximation results of neural operators are not sufficient for our purposes, because the standard regularization theory requires formulations in Sobolev and Lebesgue spaces, rather than spaces of continuous functions, and approximation rates. Our motivation of neural operators provides some insight in the structure of these operators and confirms results from the literature that the amount of representing parameters $\mathcal{T}$ can be significantly reduced in practice, which simplifies the highly complex training process.
The analysis of this paper is supported by a series of examples, where analytical computations give insight in the structure of neural operators. 

\subsection*{Acknowledgments}
This research was funded by the Austrian Science Fund
(FWF) 10.55776/P34981 -- New Inverse Problems of Super-Resolved Microscopy (NIPSUM) and 
SFB 10.55776/F68 ``Tomography Across the Scales'', project F6807-N36
(Tomography with Uncertainties). For open access purposes, the author has
applied a CC BY public copyright license to any author-accepted manuscript
version arising from this submission.
The financial support by the Austrian Federal Ministry for Digital and Economic
Affairs, the National Foundation for Research, Technology and Development and the Christian Doppler
Research Association is gratefully acknowledged. 

The authors wish to thank two referees for their valuable comments, which led to a significant 
improvement and enlargement of the manuscript.
\section*{References}
\renewcommand{\i}{\ii}
\printbibliography[heading=none]

\end{document}